\newcommand{\R}{\mathbb{R}}
\newcommand{\Sph}{\mathbb{S}}
\newcommand {\sgn} { {\rm sgn} }
\newcommand {\caL} { {\cal L} }
\newcommand {\cau}  {{\cal U}}
\newcommand {\cax}  {{\cal X}}
\newcommand {\caw} { {\cal W} }
\newcommand\Approx{\mathop{\approx}}
\newcommand {\f}   {\frac}
\newcommand{\beq}{\begin{equation}}
\newcommand{\beqa}{\begin{eqnarray}}
\newcommand{\bea} {\begin{array}{ll}}
\newcommand{\beqan}{\begin{eqnarray*}}
\newcommand{\eeq}{\end{equation}}
\newcommand{\eeqa}{\end{eqnarray}}
\newcommand{\eeqan}{\end{eqnarray*}}
\newcommand{\eea} {\end{array}}
\newtheorem{theorem}{Theorem}[section]
\newtheorem{lemma}[theorem]{Lemma}
\newtheorem{proposition}[theorem]{Proposition}
\newtheorem{corollary}[theorem]{Corollary}
\newtheoremstyle{remarkb}
	{}
	{}
	{\normalfont}
	{}
	{\bfseries}
	{}
	{ }
	{}
\theoremstyle{remarkb}
\newtheorem{remark}[theorem]{Remark}
\numberwithin{equation}{section}
\newcommand{\cqfd}{{ \hfill
                       {\unskip\kern 6pt\penalty 500
                       \raise -2pt\hbox{\vrule\vbox to 6pt{\hrule width 6pt
                       \vfill\hrule}\vrule} \par}   }}
\title{\Large \bf Blow-up dynamics of self-attracting diffusive particles driven by competing convexities}
\author{Vincent Calvez~$^{\text{a}}$, Lucilla Corrias~$^{\text{b}}$
}
\date{\today}
\begin{document}
\maketitle
\pagestyle{plain}
\pagenumbering{arabic}

\begin{abstract}
In this paper, we analyze the dynamics of an $N$ particles system evolving according the gradient flow of an energy functional. The particle system is a consistent  approximation of the Lagrangian formulation of a one parameter family of non-local drift-diffusion equations in one spatial dimension. We shall prove the global in time existence of the trajectories of the particles (under a sufficient condition on the initial distribution) and give two blow-up criteria. All these results are consequences of the competition between the discrete entropy and the discrete interaction energy. They are also consistent with the continuous setting, that in turn is a one dimension reformulation of the parabolic-elliptic Keller-Segel in high dimensions. 
\end{abstract}

{\bf Key words.} Chemotaxis, blow-up, particles methods, gradient flow.
\bigskip

{\bf AMS subject classification:} 35B44; 35D30; 35Q92; 35K55; 65M99; 92C17; 92B05.

\section{Introduction}
\label{Sec:intro}

This paper aims to give an insight of the blow-up dynamics driven by the competitions between random and directed movements undergone by particle systems. More specifically, we consider the following one parameter family of one dimensional non-local drift-diffusion equations describing self-attracting diffusive particles
\begin{equation}
\partial_t\rho=\partial_{xx}\,\rho - \partial_x(\rho\, \partial_x S_\gamma(\rho))\,,\quad x\in\R\,,\ t>0\,.
\label{eq:gamma_eq}
\end{equation}
Here $\rho$ is the density of the particles and $S_\gamma(\rho)=K_\gamma*\rho$ is the interaction potential,  with the interaction potential kernel $K_\gamma$ given by
\beq
K_\gamma (x):=\gamma^{-1}\,|x|^{-\gamma}\,,\qquad \gamma\in(0,1)\,.
\label{eq:Kgamma}
\eeq

Non-local equations of type \eqref{eq:gamma_eq} in space dimension $d\ge1$, have been introduced in several domains. Let us recall the parabolic-elliptic Keller-Segel system \cite{KS1,KS2,JL}, a first attempt to model the aggregation of {\sl Dyctiostelium discoideum}, the spatially homogeneous and dissipative Boltzmann equations for (simplified) granular flows \cite{BCCP,BCP,CMcCV,Villani.OT}, the Smoluchowski-Poisson equation modeling gravitational collapse \cite{B95,BN94,BilerWoyczynski}, and finally the McKean-Vlasov equations in kinetic theory \cite{Villani.OT}. In any of such mathematical model, the spatial dimension and the interaction potential kernel are chosen accordingly to the physical or biological phenomenon under consideration. However, the interaction potential kernel is always assumed to be even, thus reflecting the Newton's third law. Furthermore, they all share the property to be endowed with a {\sl free energy}, given by the combination of an internal energy with an interaction energy.  This common feature is due to the  attempt to model random but self-attractive movements of particles, whose mass density is conserved along time. More interestingly, the free energy gives the {\sl variational formulation} of the physical or biological phenomenon, since it is decreasing along the trajectory of the particles and satisfies an ``H-theorem''. As a consequences, the mathematical model can be seen (at least formally) as the {\sl gradient flow} of the energy in the
space of probability measures endowed with the appropriate metric. The latter fact being rigorously proved nowadays for some of the previously cited models~\cite{AGS,BCC}. 

Despite all this common features, the analytical behavior of the solutions of those type of equations can be completely different according to the choice of the interaction potential kernel. More precisely, if the interactions between particles do not enter in competition with the diffusion process (which would
be the case in \eqref{eq:gamma_eq}-\eqref{eq:Kgamma} with a negative $\gamma$), then the combined effect of the internal energy and the interaction energy gives rise to global solutions converging toward the equilibrium \cite{CMcCV,CarrilloToscani,Villani.OT}. On the other hand, if the interaction potential kernel is in competition with the diffusion process, as in \eqref{eq:gamma_eq}-\eqref{eq:Kgamma}, then the solutions can blow-up in finite time whenever the initial density is ``large'' and/or  ``concentrated'' enough. This is exactly the case of the parabolic-elliptic Keller-Segel system in dimension $d\ge2$ (\cite{BDP,CCE}) and of the Smoluchowski-Poisson equation (\cite{SireCh 04,SireCh 08}) cited above. This kind of scenario is much less understood than the previous one, for which very accurate results exist (see the references in \cite{Villani.OT}). This is also the reason why we consider here the initial valued problem associated to \eqref{eq:gamma_eq}-\eqref{eq:Kgamma}.

The family of equations \eqref{eq:gamma_eq}-\eqref{eq:Kgamma} has no physical interpretation, to the best of our knowledge, but can be interpreted as the projection of the parabolic-elliptic Keller-Segel system from the high dimension $d\ge3$ to the one spatial dimension. Indeed, the associated free energy is 
\begin{equation} 
\mathcal{E}_\gamma[\rho](t) =  \int_{\R} \rho(x,t)\log\rho(x,t)\, dx - \dfrac{1}{2} \int_{\R} \rho(x,t)\, (K_\gamma* \rho(t))(x)\, dx\,,
\label{eq:energy}
\end{equation}
where the first term is the internal energy $\cau(\rho)$, also called {\sl entropy} hereafter, and the second term is the {\sl interaction energy} ${\caw_\gamma}(\rho)$. Under regularity conditions it satisfies
\[ 
\dfrac d{dt} \mathcal{E}_\gamma[\rho](t) = - \int_\R \rho(x,t) \left|\partial_x\left(\log \rho(x,t) - (K_\gamma * \rho(t))(x)\right)\right|^2\, dx\, . 
\]
Therefore, thanks to the decreasing behavior of the energy functional and exploiting efficiently the competing convexities of the kernel $K_\gamma$ and of the internal energy density $\rho\log\rho$, it is proved that the initial valued problem associated to \eqref{eq:gamma_eq}-\eqref{eq:Kgamma} shows at least two blow-up criteria (see Proposition \ref{pr:BU continuous}), exactly as the high-dimensional parabolic-elliptic Keller-Segel system \cite{B95,CCE}. 

However, the goal of this paper is not to analyzed the continuous model \eqref{eq:gamma_eq}-\eqref{eq:Kgamma}, but a consistent approximation of its Lagrangian formulation. For that, taking advantage of the one dimensional setting, of the {\em non-negativity} of the solution $\rho$ corresponding to a non-negative initial  density  $\rho_0$ and of the {\em conservation of the initial mass}, {\sl i.e.}
\[
\int_{\R}\rho(x,t)\,dx=\int_{\R}\rho_0(x)\,dx=:M\,,\qquad t>0\,,
\]
the family of equations \eqref{eq:gamma_eq} can be reformulated in term of the pseudo inverse $X(\cdot,t)\,:\,(0,M)\to\R$ of the distribution function associated to $\rho$. For the pseudo-inverse function $X$, the energy functional \eqref{eq:energy} rewrites as 
\beq
\mathcal G_\gamma[X](t) =-\int_0^M\!\log \left(\partial_m X(m,t)\right)dm-\dfrac 1{2} \int_0^M\!\!\int_0^M K_\gamma(X(m,t)-X(m'\!,t))dmdm' ,
\label{eq:energy_X}
\eeq
while the family of equations \eqref{eq:gamma_eq}-\eqref{eq:Kgamma} rewrites as the integro-differential equations \eqref{eq:inverse_eq}. Moreover, the latter result to be the gradient flow of the energy functional \eqref{eq:energy_X} in the Hilbert space $L^2(0,M)$ (see Proposition \ref{prop:gradflow}). 

The particles approximation is then constructed from the mentioned gradient flow interpretation, thereby preserving all the properties of the continuous problem. It results in the dynamical system \eqref{eq:particle scheme} describing the trajectories $\cax(t)=(X_i(t))_{i=1,\dots,N}$ of $N\ge3$ particles distributed increasingly, {\sl i.e.}  lying in the set $\Pi=\{X\in\R^N\,:\,X_i<X_{i+1}\}$,  and carrying the same fraction $M(N+1)^{-1}$ of the conserved total mass $M$. For this dynamical system we give a global existence result (Theorem \ref{th:global existence}) based on a sufficient condition on the initial distribution of the particles that prevents collision. This sufficient condition is far from being optimal, but it is in some sense better than the sufficient condition for global existence of weak solutions of the parabolic-elliptic Keller-Segel system in high dimensions. Moreover, we obtain two blow-up criteria for the particles system (Propositions \ref{prop:discrete BU 1}  and \ref{prop:discrete continuum BU}) consistent with the blow-up criteria for the continuos equation (Proposition \ref{pr:BU continuous}) and incompatible with the global existence result. More specifically, the particles whose initial distribution satisfies one of the blow-up criteria and the particles whose initial distribution satisfies the global existence condition, lye in the subsets of $\Pi$ complementary w.r.t. the curve of the critical points of the discrete energy \eqref{eq:discrete_G}. Along this curve the time derivative of $|\cax(t)|$ is zero. However, the curve is not the unstable manifold separating the two basins of attraction (global existence and blow-up), as it is put in evidence numerically in the case of a three particles system with zero center of mass. In that case, all the mentioned criteria have been plotted in Figure \ref{fig:BU} and the unstable manifold computed numerically. The derivation of its equation would provide a single criterion to distinguish between global existence and blow-up. It would be also the proof that the dichotomy of the limit case $\gamma=0$ with logarithmic interaction kernel (see Section \ref{sec:gamma zero}) still holds true in the case $\gamma\in(0,1)$. The derivation of this single criterion is under investigation in a forthcoming paper. 

We conclude this introduction, observing that the Jensen inequality will play the main role in almost all the proofs. It is in fact quite the only tool used here. 
This reflects the fact that the competing convexities of the interaction potential kernel and of the internal energy density are the determinant keys of the dynamics of the particle system as well as of the continuous family of equations \eqref{eq:gamma_eq}.

This paper is organized as follows. In Section \ref{Sec:preliminaries} we summarize the analytical properties of problem \eqref{eq:gamma_eq}-\eqref{eq:Kgamma}. In Section \ref{Sec:continuous gradient flow} we reformulate the problem in term of the pseudo-inverse of the distribution function associated to $\rho$ and we give its gradient flow interpretation. In Section \ref{subsec:discrete} we introduce the $N$ particle scheme and we give a sufficient condition for the global existence of the trajectories. Section \ref{sec:discrete BU} is devoted to the analysis of the blow-up of the particles trajectories. Finally, in Section \ref{sec:gamma zero} we briefly recall the limit case $\gamma=0$ and we give the corresponding global existence result for the trajectories of $N\ge3$ particles. 
\section{Preliminaries}
\label{Sec:preliminaries}
In this section we briefly recall some facts and results about the existence and the blow-up of the solutions of the initial value problem for the family of equations \eqref{eq:gamma_eq}-\eqref{eq:Kgamma}. These results are inspired from those obtained in the analysis of the classical parabolic-elliptic Keller-Segel system. They are given here for completeness, but also to convince the reader that the re-writing of the family of equations \eqref{eq:gamma_eq} in term of the pseudo inverse of the distribution function of $\rho$ is the natural procedure to obtain explicit computations by its gradient flow interpretation, (see Section \ref{Sec:continuous gradient flow}).  

First of all, equation \eqref{eq:gamma_eq} is invariant under the space-time scaling $\rho_\lambda(x,t)=\lambda^{\gamma-1}\rho(\f x\lambda,\f t{\lambda^2})$, which in turn conserve the $L^{\f1{1-\gamma}}(\R)$ norm. This invariance property suggest that the Lebesgue space $L^{\f1{1-\gamma}}(\R)$ could be a critical space for the existence of global solutions of \eqref{eq:gamma_eq}-\eqref{eq:Kgamma}, as it is the case with $L^{\f d2}(\R^d)$ for the high dimensional Keller-Segel system \cite{CCE,CPZ}. However, due to the singularity of the drift term $\partial_xS_\gamma(\rho)$, inherited by  the strong singularity of the interaction potential kernel $K_\gamma$ in one dimension, this seems not to be the case for equation~\eqref{eq:gamma_eq}, (see also the end of Section \ref{subsec:discrete}). More specifically, the evolution equation of the $L^p$ norm of $\rho$ is given~by

\begin{equation}
\f d{dt}\int_\R\rho^p\,dx=-\f{4(p-1)}p\|\partial_x\rho^{\f p2}\|_{L^2(\R)}^2+2(p-1)\int_\R\rho^{\f p2}\partial_x\rho^{\f p2}(K_\gamma*\partial_x\rho)\,dx\,.
\label{eq:rho_p}
\end{equation}
From the weak Young inequality, the convolution with $K_\gamma$ is a bounded map from $L^m(\R)$ to $L^r(\R)$, with $1<m<r<\infty$ and $\f1r=\f1m+(\gamma-1)$. Therefore, for $1=\f1q+\f12+\f1m+(\gamma-1)$, $p\in(1,2]$ and $\f1m=\f12+\f{(2-p)}{pq}$, we have
\begin{equation}
\begin{split}
\int_\R&\rho^{\f p2}\partial_x\rho^{\f p2}(K_\gamma*\partial_x\rho)\,dx 
\le C\,\|\rho^{\f p2}\|_{L^q(\R)}\|\partial_x\rho^{\f p2}\|_{L^2(\R)}\|\partial_x\rho\|_{L^m(\R)}\\
&\le \f2p\,C\,\|\rho^{\f p2}\|_{L^q(\R)}\|\partial_x\rho^{\f p2}\|_{L^2(\R)}^2\|\rho^{1-\f p2}\|_{L^{\f{pq}{2-p}}(\R)}=\f2p\,C\,\|\rho\|_{L^{\f1{1-\gamma}}(\R)}\|\partial_x\rho^{\f p2}\|_{L^2(\R)}^2\,,
\end{split}
\label{est:rho_p}
\end{equation}
since, by the previous choice of indeces, $\f{pq}2=\f1{1-\gamma}$. Thus, plugging \eqref{est:rho_p} into \eqref{eq:rho_p}, it follows
\begin{equation}
\f d{dt}\int_\R\rho^p(x,t)\,dx\le\f{4(p-1)}p\|\partial_x\rho^{\f p2}(t)\|_{L^2(\R)}^2\left[C\,\|\rho(t)\|_{L^{\f1{1-\gamma}}(\R)}-1\right]
\,.
\label{eq:rho_p_finale}
\end{equation}
However, we can not deduce any control of $\int_\R\rho^{\f1{1-\gamma}}\,dx$ from the previous differential inequality, since we are not allowed to chose $p=\f1{1-\gamma}$ in \eqref{eq:rho_p_finale}. Indeed, on the one hand $1<p\le2$, and on the other hand, by the previous numerology it holds: $\f1r>0$, implying $q>2$, which in turn implies $p<\f1{1-\gamma}$. Consequently, it seems not possible to obtain a global existence result from a smallness condition on the $L^{\f1{1-\gamma}}$ norm of the initial density $\rho_0$.

The previous computation implies that a weak formulation of problem \eqref{eq:gamma_eq} should be considered. Owing to the property $K_\gamma(x)=K_\gamma(-x)$, it is natural to consider 
\begin{equation}
\begin{split}
\dfrac d{dt}\int_{\R}\rho(x,t)\, & \phi(x)\,dx= \int_{\R}\rho(x,t)\phi''(x)\,dx
\\
&+\f12\iint\limits_{\R\times\R}(\phi'(x)-\phi'(y))\rho(x,t)\f{(x-y)}{|x-y|^{\gamma+2}}\rho(y,t)\,dx\,dy\,,
\end{split}
\label{eq:weak_form}
\end{equation}
for any twice differentiable test function $\phi$ with bounded second derivative. 

Whenever $\rho(x,0)=\rho_0(x)$ in the sense of distribution, this formulation guarantees that the total mass $M$ is conserved. The existence of such a weak solution is an open question. A  possible way would be to regularize the problem, as done for instance in \cite{SenbaSuzuky}. An alternative proof of existence could be obtained using the Jordan-Kinderlehrer-Otto scheme \cite{AGS,JKO}, as in \cite{BCC}. However, we don't go through this matter since this is not the goal of the present paper. 

The weak formulation \eqref{eq:weak_form} allows also to obtain informations on ``moments'' of $\rho$. For instance, taking $\phi(x)=x$ in \eqref{eq:weak_form}, it follows that the center of mass of any weak solutions is conserved. Furthermore, for $\phi(x)=x^2$, it follows the evolution equation of the second moment $I(t):=\int_{\R}|x|^2\rho(x,t)\,dx$ of $\rho$, {\sl i.e.}
\beq
I'(t)=2M-\iint\limits_{\R\times\R}\rho(x,t)\,\f1{|x-y|^{\gamma}}\,\rho(y,t)\,dx\,dy\,.
\label{eq:I'(t)}
\eeq

When $\gamma=0$, equation \eqref{eq:I'(t)} gives the well known mass threshold phenomenon \cite{BDP,CPS} (see also Section \ref{sec:gamma zero}). In contrast with this simple dichotomy, in the case $\gamma\in(0,1)$ considered here, at least two blow-up criteria for the density $\rho$ can be deduced from \eqref{eq:I'(t)}. Indeed,  \eqref{eq:I'(t)} reads as
\beq
I'(t)=2M-\gamma\,{\caw_\gamma}(\rho)
=2M+2\,\gamma\,\mathcal{E}_\gamma[\rho]- 2\,\gamma\, \cau(\rho)\,.
\label{eq:I'(t)_gamma>0}
\eeq
It is then sufficient to obtain appropriate lower bounds for $\cau(\rho)$ or ${\caw_\gamma}(\rho)$ w.r.t. $I$ in order to get a differential inequality from \eqref{eq:I'(t)_gamma>0} assuring blow-up. 

\begin{lemma}
Let $\gamma\in(0,1)$ and $\rho\in L^1_+(\R;(1+|x|^2)\,dx)$. The following lower bounds for the interaction energy ${\caw_\gamma}(\rho)$ and the entropy $\cau(\rho)$ hold true
\beq
\gamma\,{\caw_\gamma}(\rho)\ge 2^{-\gamma/2}\,M^{2+\f\gamma2}\,I^{-\gamma/2}\,,
\label{eq:LB W_gamma}
\eeq
\beq
\cau(\rho)\ge-\f M2\log I+\f M2\log\left(\f {M^3}{2\pi e}\right).
\label{eq:LB U_gamma}
\eeq
\label{lm: continuous_LB}
\end{lemma}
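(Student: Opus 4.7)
My plan is to obtain both inequalities from a single tool — Jensen's inequality applied to the renormalised probability density $\rho/M$ — as announced in the introduction.

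For the interaction-energy bound \eqref{eq:LB W_gamma}, I use the identity
\[
\gamma\,\caw_\gamma(\rho)=\iint_{\R\times\R}\rho(x)\rho(y)\,|x-y|^{-\gamma}\,dx\,dy,
\]
which follows from comparing the two expressions for $I'(t)$ in \eqref{eq:I'(t)} and \eqref{eq:I'(t)_gamma>0}. Since $\gamma\in(0,1)$, the function $t\mapsto t^{-\gamma/2}$ is convex on $(0,\infty)$, so Jensen's inequality applied with the probability measure $M^{-2}\rho(x)\rho(y)\,dxdy$ and the observable $|x-y|^{2}$ yields
\[
M^{-2}\!\iint|x-y|^{-\gamma}\rho(x)\rho(y)\,dxdy\;\ge\;\Bigl(M^{-2}\!\iint|x-y|^{2}\rho(x)\rho(y)\,dxdy\Bigr)^{-\gamma/2}.
\]
The integral on the right expands to $2I/M-2M^{-2}\bigl(\int_\R x\rho\,dx\bigr)^{2}\le 2I/M$; because $t\mapsto t^{-\gamma/2}$ is decreasing, this upper bound can be inserted without reversing the inequality, giving finally $\gamma\,\caw_\gamma(\rho)\ge M^{2}(2I/M)^{-\gamma/2}=2^{-\gamma/2}M^{2+\gamma/2}I^{-\gamma/2}$.

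For the entropy bound \eqref{eq:LB U_gamma}, I compare the probability density $p:=\rho/M$ with the centred Gaussian $q(x)=(2\pi I/M)^{-1/2}\exp(-Mx^{2}/(2I))$, whose variance matches the prescribed second moment $I/M$ of $p$. The non-negativity of the relative entropy $\int_\R p\log(p/q)\,dx\ge0$ — itself Jensen applied to the convex function $-\log$ with reference measure $q\,dx$ — gives
\[
\int_\R p\log p\,dx\;\ge\;\int_\R p\log q\,dx\;=\;-\tfrac{1}{2}\log(2\pi I/M)-\tfrac{M}{2I}\!\int_\R x^{2}p(x)\,dx\;=\;-\tfrac{1}{2}\log(2\pi e\,I/M),
\]
since $\int x^{2}p(x)\,dx=I/M$ by construction. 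Multiplying by $M$ and inserting into the identity $\cau(\rho)=M\log M+M\!\int p\log p\,dx$, then regrouping the logarithms into $\tfrac{M}{2}\log(M^{3}/(2\pi e))$, produces the claim \eqref{eq:LB U_gamma}. No step is genuinely delicate; the main technical care lies in checking the convexity of $t\mapsto t^{-\gamma/2}$ and in keeping track of the direction of the inequality when the negative power is applied. The hypothesis $\rho\in L^{1}_{+}(\R;(1+|x|^{2})dx)$ enters only to ensure finiteness of $I$ and of all the integrals involved.
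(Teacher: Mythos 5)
Your proof is correct. For the interaction bound \eqref{eq:LB W_gamma} you do exactly what the paper does --- Jensen's inequality for the convex, decreasing function $z\mapsto z^{-\gamma/2}$ against the probability measure $M^{-2}\rho(x)\rho(y)\,dx\,dy$ --- except that you work directly in the $x$ variable while the paper phrases the same computation through the pseudo-inverse $X$; your handling of the center of mass (bounding $\iint|x-y|^2\rho\rho\,dxdy\le 2MI$ and using monotonicity) even spares the ``without loss of generality'' normalization. For the entropy bound \eqref{eq:LB U_gamma}, however, you take a genuinely different route: you invoke nonnegativity of the relative entropy of $\rho/M$ with respect to a Gaussian of matched second moment, i.e.\ the classical ``Gaussian maximizes entropy at fixed variance'' argument (this is essentially the proof the authors attribute to the cited references and deliberately set aside). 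The paper instead proves the equivalent statement \eqref{eq:LB_entropyinm} in the Lagrangian variable, via the change of variables $m=G(x)$, Jensen for $-\log$, an integration by parts using $G''=-xG'$, and Cauchy--Schwarz. Your argument is shorter and more standard; what the paper's version buys is that it is expressed entirely in terms of $\partial_m X$ and $\int_0^M X^2\,dm$, so that it discretizes verbatim into the lower bound \eqref{est:U_gamma 2} for the particle entropy $U[X]$ and the continuum of blow-up criteria of Proposition \ref{prop:discrete continuum BU} --- the relative-entropy formulation does not transfer to the $N$-particle setting as directly. Both proofs identify the same extremizer (the Gaussian), consistent with the paper's remark following the lemma.
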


\begin{proposition}[Continuous blow-up criteria]
Let $\gamma\in(0,1)$ and assume that the initial density $\rho_0\in L^1_+(\R;(1+|x|^2)\,dx)$ has finite energy and satisfies either
\beq
\int_{\R}|x|^2\rho_0(x)\,dx<\left(\f M2\right)^{\f2\gamma+1}\,,
\label{1st_BU}
\eeq
or
\beq
\int_{\R}|x|^2\rho_0(x)\,dx<\f{M^3}{2\pi\,e^{\f2\gamma+1}}\exp\left(-\f2M\,\mathcal{E}_\gamma[\rho_0]\right)\,.
\label{2nd_BU}
\eeq
Then, if the corresponding weak solution has finite energy, the solution blow-up in finite time.
\label{pr:BU continuous}
\end{proposition}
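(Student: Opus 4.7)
The plan is to combine the virial identity \eqref{eq:I'(t)_gamma>0} with the two lower bounds in Lemma \ref{lm: continuous_LB}, one bound per criterion, in classical virial-method style. In both cases the strategy is to derive a closed differential inequality of the form $I'(t)\le g(I(t))$ with $g$ increasing in $I$ and strictly negative on an interval containing the initial value $I(0)$; the monotonicity of $I$ then keeps the feedback loop closed and forces $I$ to vanish in finite time, contradicting the nonnegativity of the second moment, hence producing blow-up.

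For the first criterion, I would use the ${\caw_\gamma}$-form of \eqref{eq:I'(t)_gamma>0} and insert \eqref{eq:LB W_gamma} to get
\[
I'(t)\le 2M-2^{-\gamma/2}\,M^{2+\gamma/2}\,I(t)^{-\gamma/2}.
\]
The right-hand side vanishes exactly at $I=(M/2)^{2/\gamma+1}$, so assumption \eqref{1st_BU} makes $I'(0)<0$. Since the right-hand side is increasing in $I$ and $I$ decreases from $I(0)$, the inequality is self-improving: one obtains $I'(t)\le I'(0)<0$ as long as the solution exists. A linear upper bound on $I(t)$ then yields extinction of $I$ in finite time, hence blow-up.

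For the second criterion I would use instead the $\cau$-form of \eqref{eq:I'(t)_gamma>0} together with \eqref{eq:LB U_gamma} and the monotonicity of the free energy $\mathcal{E}_\gamma[\rho(t)]\le \mathcal{E}_\gamma[\rho_0]$, which follows from the dissipation identity recalled in the introduction under the finite-energy hypothesis of the statement. This yields
\[
I'(t)\le 2M+2\gamma\,\mathcal{E}_\gamma[\rho_0]+\gamma M\log I(t)-\gamma M\log\left(\frac{M^3}{2\pi e}\right),
\]
whose right-hand side is again increasing in $I$ and vanishes exactly at the threshold appearing in \eqref{2nd_BU}. The same monotone bootstrap concludes: $I(t)$ is forced to reach $0$ in finite time.

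The only genuinely delicate point, which I would address briefly, is that the identity \eqref{eq:I'(t)_gamma>0} and the energy dissipation are derived formally. For a weak solution one must test \eqref{eq:weak_form} against a suitable truncation/approximation of $\phi(x)=x^2$ and pass to the limit, and similarly obtain $\mathcal{E}_\gamma[\rho(t)]\le\mathcal{E}_\gamma[\rho_0]$ by regularization; both steps are standard under the finite-energy assumption stated in the proposition, and would be invoked without further elaboration.
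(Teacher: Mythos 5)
Your proof is correct and is precisely the standard virial argument that the paper itself invokes but deliberately omits ("The proof of Proposition \ref{pr:BU continuous} is also standard and not given here"): combine \eqref{eq:I'(t)_gamma>0} with \eqref{eq:LB W_gamma} for the first criterion and with \eqref{eq:LB U_gamma} plus energy dissipation for the second, check that each threshold is exactly the zero of the resulting right-hand side, and close the loop by monotonicity of $I$. Your computations of both thresholds match \eqref{1st_BU} and \eqref{2nd_BU}, and your closing remark on justifying \eqref{eq:I'(t)_gamma>0} and $\mathcal{E}_\gamma[\rho(t)]\le\mathcal{E}_\gamma[\rho_0]$ for weak solutions addresses the only point the paper glosses over.
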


A proof of \eqref{eq:LB W_gamma} and \eqref{eq:LB U_gamma} can be found in \cite{B95} and \cite{CCE} respectively. However,  we shall show in Section \ref{Sec:continuous gradient flow} an alternative proof that make use of the change of variable introduced there. Let us just observe here that the l.h.s. of \eqref{eq:LB W_gamma} and \eqref{eq:LB U_gamma} might be $+\infty$. The proof of Proposition \ref{pr:BU continuous} is also standard and not given here. We only point out that both criteria \eqref{1st_BU} and \eqref{2nd_BU} are invariant under the space dilation $f_\lambda(x)=\lambda^{\gamma-1}f(\f x\lambda)$ induced by the space-time scaling previously introduced. The interested reader can found the blow-up criteria for the high-dimensional parabolic-elliptic Keller-Segel system equivalent to \eqref{1st_BU} and \eqref{2nd_BU} in \cite{CCE}. 
\section{The continuous equation as a gradient flow}
\label{Sec:continuous gradient flow}
We shall hereafter take advantage of the one dimensional setting of the problem to perform an explicit ``change of variable'' allowing us to rewrite the family of equations  \eqref{eq:gamma_eq} in a sort of Lagrangian formulation. For this purpose, we consider the distribution function associated to $\rho$, 
\[ 
F(x,t):=\int_{-\infty}^{x} \rho(y,t)\, dy\,,\qquad x\in\R\,,
\]
and its pseudo inverse $X(\cdot,t)\,:\,(0,M)\to\R$,
\[ 
X(m,t) := \inf\left\{ x\in\R \, \left|\, F(x,t)>m\right.\right\}. 
\]
Since $\rho(\cdot,t)\in L^1_+(\R)$, $F(\cdot,t)$ is an absolutely continuous non decreasing function and therefore  a.e. derivable over $\R$, while $X$ is a right continuous non decreasing function and a.e. derivable over $(0,M)$ with $\partial_m X\ge0$ (being $X$ the gradient of a convex function, see \cite{EG}). Moreover, the following identity holds true for any $m\in(0,M)$
\beq
F(X(m,t),t)=m\,,
\label{eq:F(X)}
\eeq
(since the measure $\rho\,dx$ do not charge points, otherwise $F(X(m,t),t)\ge m$).

Next, let us proceed with formal computations, without take into account the singularity of the drift term in \eqref{eq:gamma_eq}. Integrating the latter over $(-\infty,x)$, one obtains the equation for $F$ as
\[ 
\partial_tF=\partial_{xx}\,F - \partial_xF\, \partial_x S_\gamma(\partial_xF)\,.
\]
Using the differential relations obtained from the identity \eqref{eq:F(X)}, the previous equation reads as the following family of integro-differential equations
\begin{equation}
-\partial_t X(m,t)= \partial_m \left(\dfrac1{\partial_m X(m,t)} \right)+ 
\int_0^M\,\f{X(m,t) - X(m',t)}{|X(m,t) - X(m',t)|^{\gamma+2}}\, dm' \,.
\label{eq:inverse_eq} 
\end{equation}
Obviously, the singularity in \eqref{eq:gamma_eq} has been translated into \eqref{eq:inverse_eq}, giving rise to an integral term a priori not finite, and one should consider the following weak formulation 
\begin{equation}
\begin{split}
\dfrac d{dt}&\int_0^M\phi(X(m,t))\,dm=\int_0^M\phi''(X(m,t))\,dm
\\
&+\f12\int_0^M\!\!\!\int_0^M\left(\phi'(X(m,t))-\phi'(X(m',t))\right)K'_\gamma(X(m,t)-X(m',t))\,dm\,dm',
\end{split}
\label{eq:weak_form2}
\end{equation}
obtained taking into account the boundary conditions : 
\beq
\partial_m X(0,t)=\partial_m X(M,t)=+\infty\,,\quad X(0,t)=-\infty\quad \text{and}\quad X(M,t)=+\infty\,.
\label{eq:BC for X}
\eeq

The previous results can be made rigorous, recalling that the map $X$ transports $\caL_{[0,M]}$, the Lebesgue measure in the interval $[0,M]$, over the measure $\rho\,dx$, i.e. $X{\#}\caL_{[0,M]}$ has distribution function $F$. With this transport point of view in mind, it is easily seen that \eqref{eq:weak_form2} is the translation of \eqref{eq:weak_form} in the $m$ variable, that 
\beq
\int_0^MX(m,t)\,dm=\int_\R x\,\rho(x,t)\,dx\,, 
\label{eq:Xcentered}
\eeq
and $X(\cdot,t)\in L^2(0,M)$ has soon has $\rho$ has finite second moment, since
\beq
\int_0^MX^2(m,t)\,dm = \int_\R |x|^2\rho(x,t)\, dx\,.
\label{eq:I}
\eeq
Moreover, making use of the area formula, that can be applied here thanks to the regularity properties of the map $X$ (see \cite{AGS} page 130), the entropy $\cau(\rho)$ becomes 
\beq
\cau(X)=-\! \int_0^M\log \left(\partial_m X\right)dm\,,
\label{eq:entropy in X}
\eeq
and the energy \eqref{eq:energy} translates into \eqref{eq:energy_X}.  

Despite of the singularity of the integro-differential equation \eqref{eq:inverse_eq}, we deduce finally the gradient flow interpretation of \eqref{eq:inverse_eq}.
\begin{proposition}[Gradient flow interpretation] 
Let $\gamma\in(0,1)$. The integro-differential equation \eqref{eq:inverse_eq} is the gradient flow of the energy functional $\mathcal G_\gamma[X]$ for the Hilbertian structure over $L^2(0,M)$: 
\begin{equation}
\partial_t X = -\nabla_{L^2}\, \mathcal G_\gamma[X]\, . 
\label{eq:gradient_flow}
\end{equation}
\label{prop:gradflow}
\end{proposition}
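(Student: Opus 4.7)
The plan is to compute the $L^2(0,M)$ first variation of $\mathcal G_\gamma[X]=\cau(X)-{\caw_\gamma}(X)$ and identify the resulting operator with the right-hand side of \eqref{eq:inverse_eq}. Concretely, for a smooth compactly supported perturbation $\delta X$ on $(0,M)$, I would expand $\mathcal G_\gamma[X+\varepsilon\delta X]$ to first order in $\varepsilon$ and read off the gradient from
\[
\tfrac{d}{d\varepsilon}\Big|_{\varepsilon=0}\mathcal G_\gamma[X+\varepsilon\delta X]=\int_0^M \nabla_{L^2}\,\mathcal G_\gamma[X](m)\,\delta X(m)\,dm.
\]
The gradient flow identity \eqref{eq:gradient_flow} then follows by comparing the resulting expression with $-\partial_t X$ given by \eqref{eq:inverse_eq}.

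For the entropy piece \eqref{eq:entropy in X}, formal differentiation gives $-\int_0^M (\partial_m\delta X)/(\partial_m X)\,dm$. An integration by parts transfers the derivative onto $1/\partial_m X$, the boundary contributions vanishing thanks to the conditions $\partial_m X(0,t)=\partial_m X(M,t)=+\infty$ recorded in \eqref{eq:BC for X}; this produces the term $\partial_m\bigl(1/\partial_m X\bigr)$, matching the first summand on the right-hand side of \eqref{eq:inverse_eq}.

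For the interaction piece, differentiation at $\varepsilon=0$ yields
\[
-\tfrac{1}{2}\int_0^M\!\!\int_0^M K'_\gamma\bigl(X(m)-X(m')\bigr)\bigl(\delta X(m)-\delta X(m')\bigr)\,dm\,dm'.
\]
Since $K_\gamma$ is even, $K'_\gamma$ is odd, so the integrand is invariant under the swap $(m,m')\leftrightarrow(m',m)$, which allows the skew-symmetric combination to be split as
\[
-\int_0^M \delta X(m)\int_0^M K'_\gamma\bigl(X(m)-X(m')\bigr)\,dm'\,dm=\int_0^M\delta X(m)\int_0^M\frac{X(m)-X(m')}{|X(m)-X(m')|^{\gamma+2}}\,dm'\,dm,
\]
recovering exactly the singular integral term in \eqref{eq:inverse_eq}. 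Adding the two variations identifies $\nabla_{L^2}\mathcal G_\gamma[X]$ with the right-hand side of \eqref{eq:inverse_eq}, which delivers \eqref{eq:gradient_flow}.

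The main obstacle is the singularity of $K'_\gamma$: pointwise in $m$ the inner integral need not converge, so the unsymmetrized formula is only formal. The antisymmetric combination however is well-defined, because near the diagonal $X(m)-X(m')\sim(m-m')\partial_m X$ and $\delta X(m)-\delta X(m')\sim(m-m')\partial_m\delta X$, so the integrand is of order $|m-m'|^{-\gamma}$ with $\gamma\in(0,1)$, hence locally integrable. This is the same reason why \eqref{eq:inverse_eq} must be read through the weak formulation \eqref{eq:weak_form2}, and the whole computation above should be understood in that weak sense; modulo this reinterpretation the identification of the first variation with $-\partial_t X$ is complete and the proposition follows.
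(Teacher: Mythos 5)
Your argument is exactly the paper's: compute the first variation of $\mathcal G_\gamma$, integrate the entropy term by parts to produce $\partial_m\bigl(1/\partial_m X\bigr)$, and use the oddness of $K'_\gamma$ to symmetrize the interaction term into the singular integral of \eqref{eq:inverse_eq}. The only difference is your closing remark on the near-diagonal integrability of the antisymmetrized kernel, which the paper omits since it presents the computation as purely formal; the proof is correct.
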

\begin{proof} To prove \eqref{eq:gradient_flow}, we compute formally the first variation of the functional $\mathcal G_\gamma[X]$, omitting the $t$ variable for simplicity, to obtain
\[
\begin{split}
\delta\,\mathcal G_\gamma[X](Y)&=\f d{d\theta} \mathcal G_\gamma [X +\theta Y]_{|_{\theta=0}}=-\int_0^M\f1{\partial_mX(m)}\,\partial_mY(m)\,dm\\
&\quad-\f12\int_0^M\!\!\int_0^M K'_\gamma(X(m,t)-X(m'\!,t))(Y(m)-Y(m'))dmdm'\\
&=\int_0^M\partial_m\left(\f1{\partial_mX(m)}\right)Y(m)\,dm\\
&\quad-\!\int_0^M\!\!\!\int_0^M\! K'_\gamma(X(m,t)-X(m'\!,t))Y(m)\,dm\,dm'=-<\partial_tX,Y>.
\end{split}
\]
\end{proof}

\begin{remark} In order to obtain a rigorous proof of the previous proposition one should use the Jordan-Kinderlehrer-Otto scheme \cite{AGS,JKO}.
\end{remark} 

\begin{remark} [Scaling invariance] The space-time scaling leaving invariant the equation \eqref{eq:gamma_eq} translates into $X_\lambda(m,t)=\lambda\,X(\f m{\lambda^\gamma}, \f t{\lambda^2})$, the distribution function associated to $\rho_\lambda(x,t)=\lambda^{\gamma-1}\rho(\f x\lambda,\f t{\lambda^2})$ being $F_\lambda(x,t)=\lambda^{\gamma}F(\f x\lambda,\f t{\lambda^2})$. Furthermore, it is easily proved that the scaling $X\to X_\lambda$ leaves invariant the integro-differential equation \eqref{eq:inverse_eq}.
\label{rm:scaling}
\end{remark} 

We conclude this section observing that \eqref{eq:weak_form2} with $\phi(x)=x^2$ becomes \eqref{eq:I'(t)_gamma>0}, and giving the proof of Lemma \ref{lm: continuous_LB} with the help of the transport map $X$. 
\begin{proof}[Proof of Lemma \ref{lm: continuous_LB}] We shall consider densities $\rho$ with zero center of mass, without loss of generality. Inequality \eqref{eq:LB W_gamma} follows immediately by the Jensen inequality applied to the convex function $\phi(z)=z^{-\gamma/2}$, and \eqref{eq:Xcentered}, since
\[
\begin{split}
\gamma\,{\caw_\gamma}(\rho)&=\iint\limits_{\R\times\R}\rho(x)\,\f1{|x-y|^{\gamma}}\,\rho(y)\,dx\,dy=\int_0^M\!\!\!\int_0^M|X(m)-X(m')|^{-\gamma}dm\,dm'\\
&\ge M^2\,\left(M^{-2}\int_0^M\!\!\!\int_0^M|X(m)-X(m')|^2dm\,dm'\right)^{-\f\gamma2}
=(2I)^{-\f\gamma2}\,M^{2+\f\gamma2}.
\end{split}
\]

Next, let us assume $M=1$, again  without loss of generality. In terms of the pseudo inverse $X$, inequality \eqref{eq:LB U_gamma} reads as
\beq
-\! \int_0^1\log \left(X'(m)\right)dm+\f 12\log\left(\int_0^1|X(m)|^2dm\right)\ge \f 12\log\left(\f {1}{2\pi e}\right)\,.
\label{eq:LB_entropyinm}
\eeq
Let $\tilde\rho(x):=\f1{\sqrt{2\pi}}e^{-x^2/2}$, $G$ the distribution function associated to $\tilde\rho$ and $G^{-1}$ the inverse of $G$. Using $G$ as a change of variable, the identities $G''(x)=-x\,G'(x)$ and $\int_\R x^2\,G'(x)\,dx=1$, the Jensen inequality again and the Cauchy-Scwartz inequality, we have
\[
\begin{split}
-\! \int_0^1\log \left(X'(m)\right)dm&=-\! \int_\R\log \left((X(G(x)))'\right)G'(x)\,dx+\int_\R\log(G'(x))G'(x)\,dx\\
&\ge-\log\left(\int_\R(X(G(x)))'G'(x)\,dx\right)-\f12\log(2\pi e)\\
&=-\log\left(\int_0^1X(m)G^{-1}(m)\,dm\right)-\f12\log(2\pi e)\\
&\ge-\f12\log\left(\int_0^1|X(m)|^2\,dm\right)-\f12\log(2\pi e)\,,
\end{split}
\]
and \eqref{eq:LB_entropyinm} is proved.
\end{proof}

\begin{remark}
It is worth noticing that inequality \eqref{eq:LB_entropyinm} cannot be derived using a different probability measure (because of the identity $G''(x)=-x\,G'(x)$), but it is invariant w.r.t. any dilation $\tilde\rho_\lambda(x)=\f1\lambda\tilde\rho(\f x\lambda)$ of $\tilde\rho$. Moreover, it is an identity iff $X=G_\lambda^{-1}$, i.e. $\rho=\tilde\rho_\lambda$, with $\lambda=I^{\f12}$, since
\[
\int_0^1X(m)G^{-1}(m)\,dm=\f12\int_0^1|X(m)|^2\,dm+\f12-\f12\int_0^1|X(m)-G^{-1}(m)|^2dm\,.
\]
\end{remark}
\section{The $N$ particle scheme}
\label{subsec:discrete}
We are now ready to construct a finite dimensional dynamical system approximating the integro-differential equation \eqref{eq:inverse_eq} for the pseudo inverse $X$, adapted to the gradient flow structure \eqref{eq:gradient_flow}. The main point is to discretize the free energy $\mathcal G_\gamma[X]$ by using standard quadrature approximations and then to write the finite-dimensional gradient flow of the approximated energy. Following this procedure, we guarantee the preservation of some important properties at the discrete level, {\it e.g.} the dissipation of the energy, the homogeneities of the diffusion and interaction contributions, the conservation of the center of mass and the competing convexities. 

To begin with, we introduce the regular grid of points $m_i=i\,h$, $i=1,\dots,N$, over $(0,M)$, where $N$ is a fixed integer greater than two (the number of particles) and $h=M(N+1)^{-1}$ is the space step (chosen constant for simplicity). Then, we denote by $X_i(t)$ the approximation of $X(m_i,t)$ and $\cax(t)=(X_i(t))_{i=1,\dots,N}$. We make also the convention $X_0 = -\infty$ and $X_{N+1} = +\infty$. That is to say we put ghost particles at $\pm \infty$. This convention is consistent with the boundary conditions \eqref{eq:BC for X} and makes the computations below simpler.

Since we approximate a nondecreasing function $X(\cdot,t)$ and because the free energy becomes singular whenever particles collide, it make sense to look for {\em increasing} trajectories $\cax(t)$, {\sl i.e.} lying in the set $\Pi:=\{X\in\R^N\,:\,X_i<X_{i+1}\}$. Then, for the entropy $\cau(X)$ in \eqref{eq:entropy in X} we opt for the approximation
\beq
-\int_0^M\log \left(\partial_m X(m,t)\right)dm\ \Approx\ -\ h\sum_{i=1}^{N-1}\log\left(\f{X_{i+1}(t)-X_i(t)}h\right)=:U[\cax](t)\,,
\label{eq:entropy_approximation}
\eeq
while for the interaction energy $\caw(X)$ we opt for the approximation 
\beq
\f12\int_0^M\!\!\int_0^M K_\gamma(X(m,t)-X(m'\!,t))dm\,dm'\ \Approx\,
\f{h^2}2\sum_{i=1}^{N}\sum_{\substack{j=1\\ j\neq i}}^{N}K_\gamma(X_i(t)-X_j(t)):=W_\gamma[\cax](t)\,.
\label{eq:int_energy_approx}
\eeq
Therefore, we have derived as a natural approximation of $\mathcal G_\gamma[X]$, the following discrete energy functional
\beq
G_\gamma[X]:=U[X]-W_\gamma[X]\,,\quad X\in\Pi\,.
\label{eq:discrete_G}
\eeq
Accordingly, the finite dimensional dynamical system is defined as the gradient flow of $G_\gamma$ in the euclidian space $\R^N$
\beq
\dot{\cax}(t)=-\f1h\nabla G_\gamma[\cax](t)\,.
\label{eq:discrete_system}
\eeq

The dynamical system \eqref{eq:discrete_system} shares the same time/space invariance with the continuous setting (see Remark~\ref{rm:scaling}). It is in fact invariant under the rescaling $(h_\lambda,\cax_\lambda(t))=(\lambda^\gamma\,h,\lambda \cax(\f t{\lambda^2}))$. Rescaling the step $h$ is necessary in order to take into account that the mass $M$ scales into $\lambda^\gamma\,M$ under the dilation $f_\lambda(x)=\lambda^{\gamma-1}f(\f x\lambda)$. Moreover, system (4.4) preserves the center of mass, i.e. $\sum_{i=1}^N X_i(t)=\sum_{i=1}^N X_i(0)$. Indeed, using \eqref{eq:entropy_approximation}, \eqref{eq:int_energy_approx} and the previous convention, the dynamical system \eqref{eq:discrete_system} reads in details as follows
\beq 
\dot X_i(t) = - \f 1{X_{i+1}(t)-X_i(t)}+\f 1{X_i(t)-X_{i-1}(t)}+ 
h \sum_{\substack{j=1\\ j\neq i}}^{N} \frac{\sgn(X_j(t)-X_i(t))}{|X_j(t) - X_i(t)|^{\gamma+1}}\,,
\label{eq:particle scheme}
\eeq
for $i=1,\dots,N$. Then, by summing the differential equations \eqref{eq:particle scheme} over $i$, using the telescopic summation, the boundary conditions, and the symmetry of the interaction kernel, we obtain $\sum_i\dot X_i(t) = 0$.

The local in time existence of solutions $\cax(t)\in\Pi$ of the system of ODEs \eqref{eq:particle scheme} is a consequence of the Cauchy-Lipschitz Theorem. However, the maximal time of existence can be finite. Clearly, this happens when at least two particles collide, or equivalently as the trajectory $\cax(t)$ reaches the boundary of the set~$\Pi$. By the means of the function
\[ 
\phi_\gamma (X) =\f {h}{\gamma}\sum_{i = 1}^{N-1} \dfrac1{(X_{i+1} - X_i)^{\gamma}}\,,\quad X\in\Pi\,,
\]
we shall establish hereafter a sufficient condition on the size of the initial datum $\cax(0)$ that prevents collision of particles, and assure therefore the global existence of the trajectory $\cax(t)$. 

\begin{theorem}[Global existence] 
Let $\gamma\in(0,1)$ and $N\geq 3$. There exists a constant $c(N)>0$ such that the following condition on the initial datum $\cax(0)\in\Pi$, 
\beq
\gamma\,\phi_\gamma(\cax(0))< (1+(N-3)c(N))^{-1}\,,
\label{eq:smallnesscondition}
\eeq 
guarantees the global existence of the solution $\cax(t)$. 
\label{th:global existence}
\end{theorem}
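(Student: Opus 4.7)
Here is my plan. First observe that by Cauchy--Lipschitz applied to \eqref{eq:particle scheme} on the open set $\Pi$, there is a maximal $C^1$ solution $\cax(t)$ on $[0,T^*)$; a finite $T^*$ forces at least one consecutive gap $\Delta_{i_*}(t)=X_{i_*+1}(t)-X_{i_*}(t)$ to tend to $0$, and since $\gamma\phi_\gamma(\cax)\geq h\,\Delta_i^{-\gamma}$ for every $i$, this would make $\gamma\phi_\gamma(\cax(t))\to +\infty$. The goal is therefore the a priori bound $\gamma\phi_\gamma(\cax(t))\leq \gamma\phi_\gamma(\cax(0))$ on $[0,T^*)$, obtained by a continuation argument: under the smallness hypothesis \eqref{eq:smallnesscondition}, I will show that $\gamma\phi_\gamma(\cax(t))$ is non-increasing along the flow.

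For that, I differentiate $\gamma\phi_\gamma=h\sum_{i}\Delta_i^{-\gamma}$ along \eqref{eq:particle scheme} and decompose $\dot X_{i+1}-\dot X_i$ into its entropy/diffusion contribution $E_i$ and interaction contribution $F_i$. A discrete integration by parts with the ghost convention $\Delta_0^{-1}=\Delta_N^{-1}=0$ turns the diffusion part into
\[
S_{\mathrm{diff}}=-h\gamma\sum_{i=1}^N\bigl(\Delta_i^{-\gamma-1}-\Delta_{i-1}^{-\gamma-1}\bigr)\bigl(\Delta_i^{-1}-\Delta_{i-1}^{-1}\bigr)\leq 0,
\]
each summand being non-negative by monotonicity of $x\mapsto x^{\gamma+1}$; this is the stabilising term. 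The interaction piece contains a direct pairwise attraction between $X_i$ and $X_{i+1}$ contributing the destabilising amount $+2h^2\gamma\sum_i\Delta_i^{-2\gamma-2}$, together with third-party contributions from every $X_j$ with $j\notin\{i,i+1\}$; a sign inspection shows that each such third-party pulls $X_i$ and $X_{i+1}$ in the same direction but more strongly on the closer one, thereby \emph{widening} $\Delta_i$ and hence contributing non-positively.

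The crucial step is to quantify this widening via Jensen's inequality. For each non-adjacent pair one writes $(X_j-X_i)^{-\gamma-1}-(X_j-X_{i+1})^{-\gamma-1}$ through the mean value theorem, then bounds the result by applying Jensen's inequality to the telescopic identity $X_j-X_{i+1}=\sum_{k=i+1}^{j-1}\Delta_k$ with the convex function $x\mapsto x^{-\gamma-1}$. This expresses each non-adjacent interaction kernel as a convex combination of the adjacent quantities $\Delta_k^{-\gamma-1}$; summing these bounds over pairs whose third-party has index-distance at least $2$ from both endpoints of $(i,i+1)$ produces the combinatorial factor $(N-3)c(N)$ of the statement (for $N=3$ no such pairs exist, which explains why the condition reduces to $\gamma\phi_\gamma(\cax(0))<1$). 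Combining with $S_{\mathrm{diff}}\leq 0$ yields a differential inequality of the schematic form
\[
\frac{d}{dt}(\gamma\phi_\gamma)\leq \bigl[(1+(N-3)c(N))\,\gamma\phi_\gamma-1\bigr]\cdot Q[\cax],\qquad Q[\cax]\geq 0,
\]
whose right-hand side is $\leq 0$ exactly under \eqref{eq:smallnesscondition}; the bound $\gamma\phi_\gamma(\cax(t))<(1+(N-3)c(N))^{-1}$ propagates by continuity to the whole interval $[0,T^*)$ and forces $T^*=+\infty$.

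The main obstacle will be the Jensen bookkeeping of the third step: pairing every non-adjacent kernel with the appropriate neighbouring $\Delta_k^{-\gamma-1}$, then summing over all far-pair interactions carefully enough to absorb the destabilising direct term and to extract exactly the prescribed constant $c(N)$. Everything else --- local existence in $\Pi$, the sign of $S_{\mathrm{diff}}$ via monotonicity of $x\mapsto x^{\gamma+1}$, and the final continuation --- is comparatively routine.
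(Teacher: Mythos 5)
Your overall strategy coincides with the paper's: show that $\phi_\gamma(\cax(t))$ decreases under \eqref{eq:smallnesscondition} and conclude from the fact that $\phi_\gamma$ blows up at a collision. Your sign observation is correct and is in fact a cleaner bookkeeping than the paper's split \eqref{eq:Jgamma}: regrouping the interaction contribution gap by gap, every third particle $X_j$ with $j\notin\{i,i+1\}$ pulls the nearer endpoint of the gap $Y_i=X_{i+1}-X_i$ harder, hence widens $Y_i$ and contributes non-positively to $\frac{d}{dt}\phi_\gamma$; the only destabilising interaction term is then the direct mutual attraction $2h^2\sum_i Y_i^{-2\gamma-2}$.

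However, your ``crucial step'' is aimed at the wrong term, and this leaves a genuine gap. Since the third-party contributions are non-positive they can simply be discarded, and Jensen-bounding them from above (which is what your telescoping-plus-convexity argument produces) only controls the magnitude of terms that are already helping you; it cannot be the source of the constant $1+(N-3)c(N)$. Likewise, ``combining with $S_{\mathrm{diff}}\le 0$'' cannot produce the $-1$ in your bracket: the diffusion term must be retained quantitatively as $Q=I_\gamma$, not merely through its sign. What actually remains to be proved --- and what your outline never addresses --- is the domination of the surviving destabilising term by the product of $\gamma\phi_\gamma$ with the diffusion term, i.e.\ an inequality of the form $2h^2\sum_i Y_i^{-2\gamma-2}\le \bigl(1+(N-3)c(N)\bigr)\,\gamma\,\phi_\gamma(\cax)\,I_\gamma(\cax)$. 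This is precisely where the paper works: the identity \eqref{eq:tilde R}, namely $\gamma\,\phi_\gamma\,I_\gamma=J^1_\gamma+h^2\tilde R$ with $\tilde R\ge 0$, gives $J^1_\gamma\le\gamma\,\phi_\gamma\,I_\gamma$, and the telescoping estimate \eqref{est: J2 vs J1} handles the remainder. Your route can be completed --- for instance, writing $Y_i^{-\gamma-1}=-\sum_{k>i}\bigl(Y_k^{-\gamma-1}-Y_{k-1}^{-\gamma-1}\bigr)$ and applying Cauchy--Schwarz yields $\sum_i Y_i^{-2\gamma-2}\le N(N-1)h^{-2}J^1_\gamma\le N(N-1)h^{-2}\gamma\,\phi_\gamma\,I_\gamma$, so \emph{some} admissible constant exists --- but this comparison is the heart of the proof and is absent from your plan. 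Note also that for $N=3$ your scheme requires $2h^2(Y_1^{-2\gamma-2}+Y_2^{-2\gamma-2})\le\gamma\,\phi_\gamma\,I_\gamma$ with constant exactly $1$; this is not free (the crude bound above gives $12$ instead of $1$) and corresponds to the paper's separate three-particle computation.
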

\begin{proof}
We claim that $\phi_\gamma(\cax(t))$ is strictly decreasing in time if $\cax(t)$ is a trajectory starting from $\cax(0)$ satisfying \eqref{eq:smallnesscondition}. Since $\phi_\gamma(\cax(t))$ goes to $+\infty$ as  the trajectory $\cax(t)$ reaches the boundary of the set~$\Pi$, the claim gives us the proof. 

Using \eqref{eq:particle scheme}, we can decompose the evolution of $\phi_\gamma$ along the flow $\cax(t)$ into the diffusion contribution and the interaction contribution as following
\beq
\begin{split}
\f d{dt}\phi_\gamma ({\cal X}(t))&=-h\sum_{i = 1}^{N-1} \left( \dot X_{i+1}(t) - \dot X_{i}(t)\right) \dfrac1{(X_{i+1}(t) - X_i(t))^{\gamma + 1}}\\
&= h\sum_{i = 1}^N \dot X_{i}(t)\left[ \dfrac1{(X_{i+1}(t) - X_i(t))^{\gamma + 1}}  - \dfrac1{(X_{i}(t) - X_{i-1}(t))^{\gamma + 1}}  \right]\\
&=-I_\gamma(\cax(t))+J_\gamma(\cax(t))\,,
\end{split}
\label{eq:phi gamma}
\eeq
where
\[ 
I_\gamma(X):=h \sum_{i = 1}^N \left(\dfrac {1}{X_{i+1} - X_i} -\dfrac1{X_{i} - X_{i-1}} \right)\left(\f1{(X_{i+1} - X_i)^{\gamma + 1}}-\f1{(X_{i} - X_{i-1})^{\gamma + 1}}  \right)\,,
\]
is positive, and
\[ 
J_\gamma(X):=h^2\sum_{i = 1}^N \sum_{\substack{j=1\\ j\neq i}}^{N} \frac{\sgn(X_j-X_i)}{|X_j - X_i|^{\gamma+1}}
\left(\f1{(X_{i+1} - X_i)^{\gamma + 1}}-\f1{(X_{i} - X_{i-1})^{\gamma + 1}}  \right)\,.
\]
In turn, $J_\gamma(X)$ can be decomposed into the two interaction contributions due to contiguous and non-contiguous particles respectively, as following
\beq
\begin{split}
J_\gamma(X)&= h^{2} \sum_{i = 1}^N \left(\dfrac 1{(X_{i+1} - X_i)^{\gamma + 1}} - \dfrac 1{(X_{i} - X_{i-1})^{\gamma + 1}} \right)^2\\
&\quad+ h^{2}\sum_{i = 1}^N\sum_{\substack{j=1\\ |j-i|\ge2}}^{N}\frac{\sgn(X_j-X_i)}{|X_j - X_i|^{\gamma+1}} \left(\dfrac1{(X_{i+1} - X_i)^{\gamma + 1}}  - \dfrac1{(X_{i} - X_{i-1})^{\gamma + 1}}  \right)\\
&=:J^1_\gamma(X)+J^2_\gamma(X)\,.
\label{eq:Jgamma}
\end{split}
\eeq
Moreover, with the notations $Y_i:=X_{i+1}-X_i$, $i=0,\dots, N$, and $Y=(Y_i)_i$, for simplicity, we have the identity
\beq
\begin{split}
\gamma\, h^{-2}&\,\phi_\gamma(Y)\,I_\gamma(Y)=\left(\sum_{j=1}^{N-1}\f1{Y_j^\gamma}\right)
\left(\sum_{i=0}^{N-1}\left(\f1{Y_{i+1}}-\f1{Y_{i}}\right)\left(\f1{Y_{i+1}^{\gamma+1}}-\f1{Y_{i}^{\gamma+1}}\right)
\right)\\
&=\sum_{j=1}^{N-1}\f1{Y_j^{\gamma+1}}\left(\f1{Y_{j}^{\gamma+1}}-\f1{Y_{j-1}^{\gamma+1}}\right)
-\sum_{j=1}^{N-1}\f1{Y_j^{\gamma+1}}\left(\f1{Y_{j+1}^{\gamma+1}}-\f1{Y_{j}^{\gamma+1}}\right)+\tilde R(Y)\\
&=h^{-2}\,J^1_\gamma(Y)+\tilde R(Y)\,,
\end{split}
\label{eq:tilde R}
\eeq
with the positive remainder $\tilde R(Y)$
\[
\begin{split}
\tilde R(Y)&=\sum_{j=1}^{N-1}\sum_{\substack{i=0\\ i\neq j-1}}^{N-1}\f1{Y_j^\gamma}\f1{Y_{i+1}}
\left(\f1{Y_{i+1}^{\gamma+1}}-\f1{Y_{i}^{\gamma+1}}\right)
-\sum_{j=1}^{N-1}\sum_{\substack{i=0\\ i\neq j}}^{N-1}\f1{Y_j^\gamma}\f1{Y_{i}}
\left(\f1{Y_{i+1}^{\gamma+1}}-\f1{Y_{i}^{\gamma+1}}\right)\\
&=\sum_{j=1}^{N-1}\f1{Y_j^\gamma}\f1{Y_{j+1}}\left(\f1{Y_{j+1}^{\gamma+1}}-\f1{Y_{j}^{\gamma+1}}\right)
-\sum_{j=1}^{N-1}\f1{Y_j^\gamma}\f1{Y_{j-1}}\left(\f1{Y_{j}^{\gamma+1}}-\f1{Y_{j-1}^{\gamma+1}}\right)\\
&\quad+\sum_{j=1}^{N-1}\sum_{\substack{i=0\\ i\neq j-1,j}}^{N-1}\f1{Y_j^\gamma}\left(\f1{Y_{i+1}}-\f1{Y_i}\right)
\left(\f1{Y_{i+1}^{\gamma+1}}-\f1{Y_{i}^{\gamma+1}}\right)\\
&=\sum_{j=1}^{N-2}\f1{Y_j\,Y_{j+1}}\left(\f1{Y_{j+1}^{\gamma+1}}-\f1{Y_{j}^{\gamma+1}}\right)(Y_j^{1-\gamma}-Y_{j+1}^{1-\gamma})\\
&\quad+\sum_{j=1}^{N-1}\sum_{\substack{i=0\\ i\neq j-1,j}}^{N-1}\f1{Y_j^\gamma}\left(\f1{Y_{i+1}}-\f1{Y_i}\right)
\left(\f1{Y_{i+1}^{\gamma+1}}-\f1{Y_{i}^{\gamma+1}}\right)\,.
\end{split}
\]
Therefore, the evolution equation \eqref{eq:phi gamma} becomes
\beq
\f d{dt}\phi_\gamma ({\cal X}(t))=[\gamma\,\phi_\gamma ({\cal X}(t))-1]\,I_\gamma ({\cal X}(t))+J^2_\gamma ({\cal X}(t))-h^2\,\tilde R ({\cal X}(t))\,,
\label{eq: phi gamma 2}
\eeq
and it remains to estimate the remainder $J^2_\gamma(\cax(t))-h^2\,\tilde R(\cax(t))$. The difficulty here is to measure the relative differences $|X_j-X_i|$, where  $|j-i|\ge2$, w.r.t. $|X_{i\pm1}-X_i|$. For that reason we shall distinguish the cases of three and more particles.\\
{\sl The three particles case.} When $N=3$, the above reminder is negative since 
\[
\begin{split}
h^{-2}J^2_\gamma(Y)&-\tilde R(Y)=\f1{Y_2^{\gamma+1}}\f1{(Y_1+Y_2)^{\gamma+1}}+\f1{Y_1^{\gamma+1}}\f1{(Y_1+Y_2)^{\gamma+1}}\\
&-\f1{Y_1^\gamma}\f1{Y_2^{\gamma+2}}-\f1{Y_2^\gamma}\f1{Y_1^{\gamma+2}}
-\f1{Y_1\,Y_{2}}\left(\f1{Y_{2}^{\gamma+1}}-\f1{Y_{1}^{\gamma+1}}\right)(Y_1^{1-\gamma}-Y_{2}^{1-\gamma})\\
&\le-\f1{(Y_1\,Y_{2})^\gamma}\left(\f1{Y_1}-\f1{Y_2}\right)^2
-\f1{Y_1\,Y_{2}}\left(\f1{Y_{2}^{\gamma+1}}-\f1{Y_{1}^{\gamma+1}}\right)(Y_1^{1-\gamma}-Y_{2}^{1-\gamma})\,.
\end{split}
\]
The theorem is then easily proved in that case.\\
{\sl The high number of particles case.} When $N>3$, we shall take advantage of the positivity of $\tilde R(Y)$ and estimate $J^2_\gamma(Y)$ w.r.t. $J^1_\gamma(Y)$. Indeed, using the increasing behavior of the trajectory, telescopic summations and the boundary
conditions $Y_0=-\infty$ and $Y_N = +\infty$, from \eqref{eq:Jgamma} it follows
\beq
\begin{split}
h^{-2}J^2_\gamma(Y)&\le\sum_{i=3}^{N}\sum_{j=1}^{i-2}\f1{Y_{i-1}^{\gamma+1}}\left|\f1{Y_{i}^{\gamma+1}}-\f1{Y_{i-1}^{\gamma+1}}\right|
+\sum_{i=1}^{N-2}\sum_{j=i+2}^{N}\f1{Y_{i}^{\gamma+1}}\left|\f1{Y_{i}^{\gamma+1}}-\f1{Y_{i-1}^{\gamma+1}}\right|\\
&\le(N-2)\sum_{i=3}^{N}\sum_{j=i}^{N}\left(\f1{Y_{j-1}^{\gamma+1}}-\f1{Y_{j}^{\gamma+1}}\right)
\left|\f1{Y_{i}^{\gamma+1}}-\f1{Y_{i-1}^{\gamma+1}}\right|\\
&\quad+(N-2)\sum_{i=1}^{N-2}\sum_{j=1}^{i}\left(\f1{Y_{j}^{\gamma+1}}-\f1{Y_{j-1}^{\gamma+1}}\right)
\left|\f1{Y_{i}^{\gamma+1}}-\f1{Y_{i-1}^{\gamma+1}}\right|\\
&\le(N-2)\sum_{i=3}^{N}\left[\f12\,J^1_\gamma(Y)+\f12(N-i+1)\left(\f1{Y_{i}^{\gamma+1}}-\f1{Y_{i-1}^{\gamma+1}}\right)^2\right]\\
&\quad+(N-2)\sum_{i=1}^{N-2}\left[\f12\,J^1_\gamma(Y)+\f i2\left(\f1{Y_{i}^{\gamma+1}}-\f1{Y_{i-1}^{\gamma+1}}\right)^2\right]\\
&\le 2(N-2)^2h^{-2}J^1_\gamma(Y)\,.
\end{split}
\label{est: J2 vs J1}
\eeq
Then, plugging \eqref{est: J2 vs J1} into the differential equation \eqref{eq: phi gamma 2} and using the identity \eqref{eq:tilde R}, we obtain the differential inequality
\[
\f d{dt}\phi_\gamma ({\cal X}(t))\le[(1+2(N-2)^2)\gamma\,\phi_\gamma ({\cal X}(t))-1]\,I_\gamma ({\cal X}(t))\,,
\]
and the theorem follows again. 
\end{proof}

We conclude this section with some remarks on the smallness condition \eqref{eq:smallnesscondition} for global existence. The function $\phi_\gamma$ has been chosen for its homogeneity property, that allows us to obtain the differential inequality on $\phi_\gamma$ itself. In addition, $\phi_\gamma$ is invariant under the rescaling $(h_\lambda,X_\lambda)=(\lambda^\gamma\,h,\lambda\,X)$ of the discrete setting. More interestingly, $\phi_\gamma(X)$ controls the interaction potential $W_\gamma(X)$, since the following inequalities (also invariant) hold true for all $X\in\Pi$
\beq
h\,\phi_\gamma(X)<W_\gamma[X]< h\,\f N2\,\phi_\gamma(X)\,.
\label{eq:phi control W}
\eeq 
Therefore, it is natural to obtain a global existence criterion from the analysis of the evolution of $\phi_\gamma(\cax(t))$. Indeed, the discrete blow-up criteria in Section~\ref{sec:discrete BU} are based, roughly speaking, on the fact that {\sl if the interaction energy $W_\gamma$ is initially large, then it remains large and aggregation dominates diffusion}. 

The left inequality in \eqref{eq:phi control W} is an immediate consequence of the definition of $W_\gamma$. For the right one, we have
\beq
\begin{split}
2\gamma\,W_\gamma[X]&=h^2\sum_{i=1}^{N-1}\sum_{j=i+1}^{N}\f1{(X_j-X_i)^\gamma}+h^2\sum_{j=2}^{N}\sum_{i=1}^{j-1}\f1{(X_j-X_i)^\gamma}\\
&< h^2\sum_{i=1}^{N-1}(N-i)\f1{(X_{i+1}-X_i)^\gamma}+h^2\sum_{j=2}^{N}(j-1)\f1{(X_j-X_{j-1})^\gamma}\\
&=h^2\,N\sum_{i=1}^{N-1}\f1{(X_{i+1}-X_i)^\gamma}\,.
\end{split}
\label{eq:discrete HLS}
\eeq
Moreover, following the previous discretization procedure, it is easily seen that $\gamma\,h^\gamma\,\phi_\gamma(X)$ is an approximation of $\|\rho\|_{L^{\gamma+1}(\R)}^{\gamma+1}$. Taking this into account, the right inequality in \eqref{eq:phi control W} results a (non-optimal) discretisation of the following inequality in the continuous setting (derived from  the Hardy-Littlewood-Sobolev inequality plus interpolation)
\[
\int_\R\int_\R \rho(x)\f1{|x-y|^\gamma}\rho(y)\,dx\,dy\le C_\gamma\,M^{1-\gamma}\|\rho\|_{L^{\gamma+1}(\R)}^{\gamma+1}\,.
\]
The non optimality is due to the crude inequality established in \eqref{eq:discrete HLS}. Since the $L^{\gamma+1}$ norm is controlled by the presumably  critical norm $L^{\f1{1-\gamma}}$, {\sl i.e.} $\|\rho\|_{L^{\gamma+1}(\R)}^{\gamma+1}\le M^{\gamma}\,\|\rho\|_{L^{\f1{1-\gamma}}(\R)}$, the smallness condition \eqref{eq:smallnesscondition} on  $\phi_\gamma(\cax(0))$ is less restrictive than a smallness condition on the discretisation of the $L^{\f1{1-\gamma}}$ norm of $\rho_0$ (see Section~\ref{Sec:preliminaries}). 

\begin{remark}
Whenever an implicit Euler scheme w.r.t. the time variable $t$ is applied to the discrete dynamical system \eqref{eq:discrete_system}, one follow down to the space discrete Jordan-Kinderlehrer-Otto scheme \cite{AGS,JKO}, analyzed in \cite{BCC} in the limit case $\gamma=0$.
\end{remark}
\section{Blow-up criteria for the particle system}
\label{sec:discrete BU}
The main idea to derive blow-up criteria for the dynamical system~\eqref{eq:discrete_system} is to follow carefully the time evolution of the square of the euclidean norm $|\cax(t)|$, giving the approximation of the second moment of $\rho$ by~\eqref{eq:I}. Due to the gradient structure \eqref{eq:discrete_system}, it holds
\[
\f12\f d{dt}|\cax(t)|^2=-\f1h\,\f\partial{\partial\lambda}\left(G_\gamma[\lambda\cax](t)\right)_{|_{\lambda=1}}\,.
\]
Moreover, since
\[
G_\gamma[\lambda X]=-h(N-1)\log\lambda+U[X]-\lambda^{-\gamma}\,W_\gamma[X]\,,
\]
we have finally
\beq
\f h2\f d{dt}|\cax(t)|^2=h(N-1)-\gamma\,W_\gamma[\cax](t)=h(N-1)+\gamma G_\gamma[\cax](t)-\gamma U[\cax](t)\,.
\label{eq:I'_discrete}
\eeq
It remains to find lower bounds  for $W_\gamma$ and $U$, to be plugged in the differential equation~\eqref{eq:I'_discrete}. To begin with, we denote hereafter $A$ the $(N-1)\times N$ matrix of the system $Y_i=X_{i+1}-X_i$, $i=1,\dots, N-1$, $B$ the $N\times N$ matrix of the system obtained adding to the previous system the equation $\sum_{i=1}^NX_i=0$, and we establish the following technical Lemma, giving the euclidean norm $|X|$ in term of the relative differences $(X_{j}-X_i)$, $j>i$.
\begin{lemma} For any $X\in\R^N$ lying in the hyperplane $\sum_{i=1}^NX_i=0$, it holds
\beq
N\,|X|^2=\sum_{i=1}^{N-1}\sum_{j=i+1}^{N}(X_j-X_i)^2\,.
\label{eq:identity 1}
\eeq
Moreover, if $X\in\Pi_0:=\{X\in\R^N\,:\,X_i<X_{i+1}\text{ and } \sum_{i=1}^NX_i=0\}$ and $Y=A\,X$, it holds
\beq
|X|^2=Y^T(A\,A^T)^{-1}Y\,,
\label{eq:identity 2}
\eeq 
and
\beq
|X|^2\ge\f2N\sum_{i=1}^{N-1}\sum_{j=i}^{N-1}(X_{i+1}-X_i)(X_{j+1}-X_j)\,,
\label{eq:inequality 3}
\eeq
\label{lm:technical}
\end{lemma}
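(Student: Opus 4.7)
The plan is to treat the three parts in order, all reducing to direct bookkeeping once the right representation of $|X|^2$ in terms of the gap variables $Y_i=X_{i+1}-X_i$ is in hand.

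For \eqref{eq:identity 1} I would just expand and symmetrize:
\[
\sum_{i<j}(X_j-X_i)^2 = \f12 \sum_{i,j=1}^N(X_j-X_i)^2 = N|X|^2 - \Bigl(\sum_{i=1}^N X_i\Bigr)^2,
\]
and the constraint $\sum_i X_i=0$ kills the second term.

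For \eqref{eq:identity 2} the observation is that $\ker A = \mathrm{span}((1,\dots,1))$, so the hyperplane $\{\sum_i X_i=0\}$ coincides with $(\ker A)^\perp = \mathrm{range}(A^T)$. Hence every $X\in\Pi_0$ can be written as $X=A^TZ$ for a unique $Z\in\R^{N-1}$. Since $A$ has full row rank, $AA^T$ is invertible; from $Y=AX=(AA^T)Z$ we recover $Z=(AA^T)^{-1}Y$, and
\[
|X|^2 = Z^T(AA^T)Z = Y^T(AA^T)^{-1}Y.
\]

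For \eqref{eq:inequality 3} I would avoid inverting $AA^T$ abstractly and instead combine \eqref{eq:identity 1} with the telescoping identity $X_j-X_i=\sum_{k=i}^{j-1}Y_k$. Substituting and exchanging the order of summation yields
\[
N|X|^2 \;=\; \sum_{1\le i<j\le N}\ \sum_{k,\ell=i}^{j-1} Y_kY_\ell \;=\; \sum_{k,\ell=1}^{N-1} c_{k\ell}\, Y_kY_\ell,
\]
where $c_{k\ell}$ counts the pairs $(i,j)$ with $1\le i\le \min(k,\ell)$ and $\max(k,\ell)<j\le N$, so that $c_{k\ell}=\min(k,\ell)(N-\max(k,\ell))$. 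As a by-product this pins down $(AA^T)^{-1}_{k\ell}=c_{k\ell}/N$, the classical discrete Green's function, giving an alternative route to \eqref{eq:identity 2}. Now $X\in\Pi_0$ forces $Y_k>0$ for every $k$, so \eqref{eq:inequality 3} becomes a termwise comparison: one needs $c_{kk}=k(N-k)\ge 2$ on the diagonal and $c_{k\ell}=\min(k,\ell)(N-\max(k,\ell))\ge 1$ off-diagonal, both of which hold trivially for $1\le k,\ell\le N-1$ as soon as $N\ge 3$.

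The only real content is identifying the combinatorial coefficient $c_{k\ell}$; once that is done, positivity of the $Y_k$ converts a matrix inequality into a clean termwise bound, and the standing assumption $N\ge 3$ is used solely to secure the diagonal estimate $k(N-k)\ge 2$.
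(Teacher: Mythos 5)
Your proof is correct, and on the third (and only nontrivial) part it takes a more explicit route than the paper. For \eqref{eq:identity 1} the two arguments are equivalent one-line symmetrizations. For \eqref{eq:identity 2} the paper augments $A$ to the invertible $N\times N$ matrix $B$ (adding the row $\sum_i X_i=0$) and writes $|X|^2=|B^{-1}\tilde Y|^2$ with $\tilde Y=(Y,0)$, whereas you use $\ker A=\mathrm{span}(1,\dots,1)$ to write $X=A^TZ$ and eliminate $Z$; both are clean, and note that neither actually needs the ordering constraint $X_i<X_{i+1}$, only the zero-mean condition. The real divergence is in \eqref{eq:inequality 3}: the paper simply asserts that $(AA^T)^{-1}$ has positive entries, at least $\f1N$ off the diagonal and at least $\f2N$ on it, and concludes by positivity of the gaps $Y_k$; you instead derive the identity $N|X|^2=\sum_{k,\ell}\min(k,\ell)\,(N-\max(k,\ell))\,Y_kY_\ell$ by telescoping inside \eqref{eq:identity 1}, which identifies $(AA^T)^{-1}_{k\ell}=\min(k,\ell)(N-\max(k,\ell))/N$ (the discrete Dirichlet Green's function) and reduces \eqref{eq:inequality 3} to the termwise bounds $k(N-k)\ge2$ and $\min(k,\ell)(N-\max(k,\ell))\ge1$. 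This actually supplies the justification for the matrix facts the paper leaves unproved, and it makes transparent that the diagonal bound is exactly where the standing hypothesis $N\ge3$ enters (for $N=2$ one would only get $c_{11}=1$). The trade-off is a small combinatorial count on your side versus an appeal to known properties of the tridiagonal inverse on the paper's side; your version is self-contained.
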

\begin{proof} Observing that $N\,X_i^2=\sum_{j=1}^{N}(X_i-X_j)X_i$, we have
\begin{align*}
N\,|X|^2&=\sum_{i=2}^{N}\sum_{j=1}^{i-1}(X_i-X_j)X_i+\sum_{i=1}^{N-1}\sum_{j=i+1}^{N}(X_i-X_j)X_i\\
&=\sum_{i=1}^{N-1}\sum_{j=i+1}^{N}(X_j-X_i)X_j+\sum_{i=1}^{N-1}\sum_{j=i+1}^{N}(X_i-X_j)X_i\,,
\end{align*}
and the identity \eqref{eq:identity 1} follows. Next, with $\tilde Y=(Y,0)=(A\,X,0)\in\R^N$, we have
\[
|X|^2=|B^{-1}\tilde Y|^2=\tilde Y^T(B\,B^T)^{-1}\tilde Y=Y^T(A\,A^T)^{-1}Y\,,
\]
and \eqref{eq:identity 2} is proved. Finally, observing that the matrix $(A\,A^T)^{-1}$ is symmetric with positive entries each of which, outside the main diagonal is greater or equal to $\f1N$, and on the main diagonal greater or equal $\f2N$, we obtain inequality~\eqref{eq:inequality 3}.
\end{proof} 
\begin{proposition}[Blow-up criterion induced by $W_\gamma$]
Let $\gamma\in(0,1)$ and $N\ge~3$. Assume that the initial data $\cax(0)\in\Pi_0$ satisfies 
\beq
|\cax(0)|^2< \f1h\,\left(\f M2\right)^{\f2\gamma+1}\dfrac {N^{2/\gamma}(N-1)}{(N+1)^{\f2\gamma+1} }\,.
\label{eq:criterion BU discrt 1}
\eeq
Then, the corresponding solution $\cax(t)\in\Pi_0$ vanishes in finite time. Moreover, criterion \eqref{eq:criterion BU discrt 1} is incompatible with condition \eqref{eq:smallnesscondition}. 
\label{prop:discrete BU 1} 
\end{proposition}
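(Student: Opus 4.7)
The plan is to parallel the continuous argument behind Proposition~\ref{pr:BU continuous}: track the quantity $\Theta(t):=|\cax(t)|^2$, which by \eqref{eq:I} plays the role of the second moment of $\rho$, and show that under \eqref{eq:criterion BU discrt 1} it must vanish in finite time. Since $\Theta(t)\to 0$ with $\cax(t)\in\Pi_0$ would force every gap $X_{i+1}(t)-X_i(t)$ to collapse, this forces a collision and hence genuine blow-up.

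The starting point is the identity \eqref{eq:I'_discrete}, which already gives $\f{h}{2}\Theta'(t) = h(N-1) - \gamma W_\gamma[\cax](t)$, so everything reduces to a discrete analogue of the lower bound \eqref{eq:LB W_gamma}. I would apply Jensen's inequality to the convex map $z\mapsto z^{-\gamma/2}$, averaging over the $N(N-1)$ ordered pairs $(i,j)$ with $i\neq j$, and then use identity \eqref{eq:identity 1} of Lemma~\ref{lm:technical} (which on $\Pi_0$ reads $\sum_{i\ne j}(X_j-X_i)^2 = 2N|X|^2$) to get
\[
\gamma\,W_\gamma[X] \;=\; \f{h^2}{2}\sum_{i\ne j}|X_i-X_j|^{-\gamma} \;\ge\; \f{h^2\,N\,(N-1)^{1+\gamma/2}}{2^{1+\gamma/2}}\,|X|^{-\gamma}.
\]
Plugging this into the evolution identity yields
\[
\f h2\Theta'(t)\;\le\;h(N-1)\;-\;\f{h^2\,N\,(N-1)^{1+\gamma/2}}{2^{1+\gamma/2}}\,\Theta(t)^{-\gamma/2},
\]
and a direct algebraic check, substituting $h=M(N+1)^{-1}$, shows that the right-hand side vanishes exactly when $\Theta$ equals the threshold $C_1$ on the right-hand side of \eqref{eq:criterion BU discrt 1}. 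Consequently, under $\Theta(0)<C_1$ we have $\Theta'(0)<0$; since the negative term only grows as $\Theta$ decreases, the strict inequality $\Theta'(t)\le -\delta$ for some $\delta=\delta(\Theta(0))>0$ persists as long as the trajectory remains in $\Pi_0$, and therefore $\Theta$ must reach $0$ in finite time.

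For the incompatibility with \eqref{eq:smallnesscondition}, I would combine the Jensen lower bound just derived with the right-hand inequality of \eqref{eq:phi control W}, namely $W_\gamma[X]<h\tfrac{N}{2}\phi_\gamma(X)$. Under hypothesis \eqref{eq:criterion BU discrt 1} one has $\gamma W_\gamma[\cax(0)]>h(N-1)$, whence
\[
\gamma\,\phi_\gamma(\cax(0))\;>\;\f{2(N-1)}{N}\;\ge\;\f43\;>\;1\;\ge\;(1+(N-3)c(N))^{-1}
\]
for $N\ge 3$, so \eqref{eq:smallnesscondition} is necessarily violated.

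The only delicate point of the whole argument is matching the Jensen constant with the precise numerology of $C_1$; the rest is bookkeeping with $h=M(N+1)^{-1}$, and the finite-time-collapse step itself is standard once the differential inequality $\Theta'\le -\delta$ is in hand.
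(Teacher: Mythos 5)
Your proposal is correct and follows essentially the same route as the paper: the identity \eqref{eq:I'_discrete}, Jensen's inequality for $z\mapsto z^{-\gamma/2}$ combined with identity \eqref{eq:identity 1} to get the lower bound on $W_\gamma$ (your constant $h^2N(N-1)^{1+\gamma/2}2^{-1-\gamma/2}$ agrees with \eqref{eq:lowerboundWgamma}), the resulting differential inequality for $|\cax(t)|^2$, and the incompatibility argument via $\gamma W_\gamma[\cax(0)]>h(N-1)$ together with the right inequality in \eqref{eq:phi control W}. The only differences are cosmetic (summing over ordered pairs rather than $i<j$, and spelling out the finite-time-collapse step that the paper leaves implicit).
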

\begin{proof} From the finite form of the Jensen inequality applied to the convex function $\phi(z)=z^{-\gamma/2}$, we obtain for any $X\in\Pi$
\beq
\bea
\displaystyle
W_\gamma[X]&=\f{h^2}\gamma\,\f1{\mu(N)}\sum_{i=1}^{N-1}\sum_{j=i+1}^{N}\mu(N)\,\phi(|X_i-X_j|^2)
\\ \\ \displaystyle
&\ge\f{h^2}\gamma\,\f1{\mu(N)}\phi\left(\sum_{i=1}^{N-1}\sum_{j=i+1}^{N}\mu(N)\,|X_i-X_j)|^2\right)\,,
\eea
\label{est1:W_gamma}
\eeq
where $\mu(N)^{-1}=\f{N(N-1)}2$. Plugging~\eqref{eq:identity 1} into \eqref{est1:W_gamma}, we obtain the lower bound
\beq
W_\gamma[X]\ge\f{h^2}\gamma\,\f1{\mu(N)}\left(\f2{N-1}\,|X|^2\right)^{-\gamma/2}
\label{eq:lowerboundWgamma}
\eeq
and the differential inequality for $|\cax(t)|^2$
\[
\f d{dt}|\cax(t)|^2\le2(N-1)-h\,N(N-1)\,\left(\f{N-1}2\right)^{\gamma/2}\,|\cax(t)|^{-\gamma}\,.
\]
Criterion \eqref{eq:criterion BU discrt 1} follows immediately. The incompatibility with condition \eqref{eq:smallnesscondition}, is a direct consequence of the right inequality in \eqref{eq:phi control W} and of the lower bound for the discrete interaction energy
\[
W_\gamma[X]>\f h\gamma\,(N-1)\,,
\]
obtained plugging \eqref{eq:criterion BU discrt 1} into \eqref{eq:lowerboundWgamma}.
\end{proof}

It is worth noticing that the lower bound \eqref{eq:lowerboundWgamma} is the approximation of \eqref{eq:LB W_gamma} and that the former has been obtained exactly as the latter, i.e. using the Jensen inequality applied to the same convex function. Moreover,  
the discrete criterion \eqref{eq:criterion BU discrt 1} converges toward the continuous criterion \eqref{1st_BU} as $h\to0$, if the initial data $\cax(0)\in\Pi_0$ is chosen such that $h\,|\cax(0)|^2\to\int_\R|x|^2\rho_0(x)\,dx$ as $h\to0$.\\

It is much more tricky to derive lower bounds for the discrete entropy \eqref{eq:entropy_approximation} giving the corresponding blow-up criteria. The reason is that one has to use the finite form of the Jensen inequality in a sharp way, as it has been done in \eqref{est1:W_gamma}. Let us first deduce an intermediate criterion to illustrate this technical problem. 
\begin{proposition}[Blow-up criterion induced by $U$]
Let $\gamma\in(0,1)$ and $N\ge~3$. Assume that the initial data $\cax(0)\in\Pi_0$ satisfies 
\beq
|\cax(0)|^2< h^2(N-1)\,e^{-2/\gamma}\exp\left(-\f2{h(N-1)}\,G_\gamma[\cax](0)\right)\,.
\label{eq:criterion BU discrt 2}
\eeq
Then, the corresponding solution $\cax(t)\in\Pi_0$ vanishes in finite time. Moreover, criterion \eqref{eq:criterion BU discrt 2} is incompatible with condition \eqref{eq:smallnesscondition}. 
\label{prop:discrete BU 2} 
\end{proposition}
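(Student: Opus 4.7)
The plan is to follow the template of Proposition~\ref{prop:discrete BU 1}, anchored on the gradient-flow identity~\eqref{eq:I'_discrete}, but to replace the lower bound on $W_\gamma$ by a discrete analogue of the entropy bound~\eqref{eq:LB U_gamma}. The main new ingredient to establish is
\[
U[\cax]\ge -\f{h(N-1)}{2}\log\left(\f{2\,|\cax|^2}{h^2(N-1)^2}\right),\qquad \cax\in\Pi_0.
\]
To obtain it I would apply Jensen's inequality to the concave function $\log$ on the gaps $Y_i=X_{i+1}-X_i>0$, giving $\sum_{i=1}^{N-1}\log(Y_i/h)\le (N-1)\log\bigl((X_N-X_1)/(h(N-1))\bigr)$, and then control $X_N-X_1$ by $|\cax|$. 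Since $\cax\in\Pi_0$ forces $X_1<0<X_N$, the elementary Young-type estimate $(X_N-X_1)^2\le 2(X_1^2+X_N^2)\le 2\,|\cax|^2$ closes the step. This is the discrete counterpart of the Gaussian change of variable used in the proof of Lemma~\ref{lm: continuous_LB}.

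Plugging this lower bound together with the dissipation $G_\gamma[\cax](t)\le G_\gamma[\cax](0)$ into~\eqref{eq:I'_discrete} yields the differential inequality
\[
\f{d}{dt}|\cax(t)|^2\le 2(N-1)+\f{2\gamma}{h}\,G_\gamma[\cax](0)+\gamma(N-1)\log\left(\f{2\,|\cax(t)|^2}{h^2(N-1)^2}\right).
\]
The right-hand side is strictly negative as soon as
\[
|\cax(t)|^2<\f{h^2(N-1)^2}{2}\,e^{-2/\gamma}\exp\left(-\f{2}{h(N-1)}\,G_\gamma[\cax](0)\right).
\]
Since $(N-1)^2/2\ge N-1$ for $N\ge 3$, hypothesis~\eqref{eq:criterion BU discrt 2} guarantees this at $t=0$, and the inequality is self-maintaining: as long as $|\cax(t)|^2\le|\cax(0)|^2$ the right-hand side stays bounded above by a strictly negative constant, so $|\cax(t)|^2$ decreases linearly and vanishes in finite time, forcing collision of two consecutive particles.

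For the incompatibility with~\eqref{eq:smallnesscondition}, I would isolate $G_\gamma[\cax](0)=U[\cax(0)]-W_\gamma[\cax(0)]$ from~\eqref{eq:criterion BU discrt 2} and substitute the Step~1 bound on $U$. The $|\cax(0)|^2$-dependent logarithms cancel, leaving a strict lower bound $W_\gamma[\cax(0)]> h(N-1)/\gamma$. Combined with the right inequality in~\eqref{eq:phi control W} this gives $\gamma\,\phi_\gamma(\cax(0))>2(N-1)/N\ge 4/3>1$, which contradicts~\eqref{eq:smallnesscondition}.

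The only delicate point is Step~1: the Jensen plus centering argument produces the mildly sub-optimal factor $(N-1)^2/2$ inside the logarithm instead of the $(N-1)$ appearing in~\eqref{eq:criterion BU discrt 2}. This discrepancy is harmless for the statement itself thanks to $(N-1)^2/2\ge N-1$, $N\ge 3$, but it is precisely where the ``intermediate'' character advertised by the authors shows up: a sharper discrete entropy bound, presumably via a discrete Gaussian-type reference measure replicating the proof of Lemma~\ref{lm: continuous_LB}, would restore the factor $(N-1)$ and make the constant optimal.
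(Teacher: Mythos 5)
Your proof is correct, and its overall architecture coincides with the paper's: the gradient-flow identity \eqref{eq:I'_discrete}, a Jensen-type lower bound for the discrete entropy in terms of $\log|\cax|^2$, the dissipation $G_\gamma[\cax](t)\le G_\gamma[\cax](0)$, an ODE comparison, and the same rearrangement-plus-\eqref{eq:phi control W} argument for the incompatibility with \eqref{eq:smallnesscondition}. Where you genuinely diverge is in how the entropy bound is produced. The paper derives \eqref{est:U_gamma} by combining inequality \eqref{eq:inequality 3} of Lemma \ref{lm:technical} (the $(A\,A^T)^{-1}$ estimate, which needs $X\in\Pi_0$) with Jensen applied to the double sum $\sum_{i}\sum_{j}\log\bigl((X_{i+1}-X_i)(X_{j+1}-X_j)\bigr)$. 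You instead use the one-line Jensen estimate $\sum_i\log Y_i\le(N-1)\log\bigl((X_N-X_1)/(N-1)\bigr)$ together with the elementary bound $(X_N-X_1)^2\le2(X_1^2+X_N^2)\le2|\cax|^2$ (which, incidentally, holds without the sign condition $X_1<0<X_N$ and without the zero-center-of-mass constraint), thereby bypassing Lemma \ref{lm:technical} entirely. Your resulting constant is $h^2(N-1)^2/2$ in place of the paper's $h^2(N-1)$; since $(N-1)^2/2\ge N-1$ for $N\ge3$, your threshold dominates the stated one and the hypothesis \eqref{eq:criterion BU discrt 2} is covered, so the statement follows. In fact your entropy bound is \emph{strictly sharper} than \eqref{est:U_gamma} for $N>3$ (and identical for $N=3$), so describing the factor $(N-1)^2/2$ as ``sub-optimal'' relative to the paper gets the direction backwards: you prove a marginally stronger criterion with less machinery. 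What your route gives up is only the structural parallel with the proof of Lemma \ref{lm: continuous_LB} and with the refinement of Proposition \ref{prop:discrete continuum BU}, which is presumably why the authors set up \eqref{eq:inequality 3} in the first place.
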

\begin{proof} Given $X\in\Pi_0$, from \eqref{eq:inequality 3} and the finite form of the Jensen inequality applied to the concave function $\log z$ we derive
\[
\begin{split}
\log|X|^2&\ge\log\left(\f2{N\mu(N)}\right)+\log\left(\mu(N)\sum_{i=1}^{N-1}\sum_{j=i}^{N-1}(X_{i+1}-X_i)(X_{j+1}-X_j)\right)
\\ 
&\ge\log(N-1)+\mu(N)\sum_{i=1}^{N-1}\sum_{j=i}^{N-1}\log((X_{i+1}-X_i)(X_{j+1}-X_j))
\\ 
&=\log(N-1)+N\mu(N)\sum_{i=1}^{N-1}\log(X_{i+1}-X_i)\,.
\end{split}
\]
Hence, being $U[X]=h(N-1)\log h-h\sum_{i=1}^{N-1}\log(X_{i+1}-X_i)$, it holds
\beq
U[X]\ge h(N-1)\log h-h\f{(N-1)}2\log|X|^2+h\f{(N-1)}2\log(N-1)\,.
\label{est:U_gamma}
\eeq
Plugging the lower bound \eqref{est:U_gamma} in \eqref{eq:I'_discrete} and using the decreasing behaviour of $G_\gamma[\cax](t)$, we get the differential inequality
\[
\begin{split}
\f d{dt}|\cax(t)|^2\le&\,2\,(N-1)+2\gamma\,h^{-1}G_\gamma[\cax](0)+\gamma(N-1)\log|\cax(t)|^2\\
&\quad-\gamma(N-1)\log(h^2(N-1))\,.
\end{split}
\]
Criterion \eqref{eq:criterion BU discrt 2} follows immediately. 

To prove that \eqref{eq:criterion BU discrt 2} is not compatible with condition \eqref{eq:smallnesscondition}, it suffices to write \eqref{eq:criterion BU discrt 2} equivalently as
\[
\f{h(N-1)}2\log|\cax(0)|^2+U[\cax](0)-\f{h(N-1)}2\log(h^2(N-1))<W_\gamma[\cax](0)-\f h\gamma(N-1)
\]
and to observe that the quantity in the l.h.s. of the above inequality is non-negative by \eqref{est:U_gamma}. The claim follows then by using the right inequality in \eqref{eq:phi control W}. 
\end{proof}
It is immediately seen that criterion \eqref{eq:criterion BU discrt 2} doesn't converge toward \eqref{2nd_BU} as $h\to0$. This is essentially because the lower bound \eqref{est:U_gamma} has been obtained using the non-sharp inequality \eqref{eq:inequality 3} (\eqref{eq:inequality 3} becomes an identity iff $N=3$). As a consequence, \eqref{est:U_gamma} is not consistent with \eqref{eq:LB_entropyinm}. However, \eqref{eq:criterion BU discrt 2} is contained in a continuum of blow-up criteria obtained mimicking the proof giving \eqref{eq:LB_entropyinm}. 

\begin{proposition}[A continuum of blow-up criteria induced by $U$]
Let $\gamma~\in~(0,1)$ and $N\ge~3$. Assume that the initial data $\cax(0)\in\Pi_0$ satisfies 
\beq
|\cax(0)|^2< h^2(N-1)^2\,C(N)\,e^{-\f2\gamma}\,\exp\left(-\f2{h(N-1)}\,G_\gamma[\cax](0)\right)\, ,  
\label{eq:criterion BU discrt 3}
\eeq
where $C(N):=\sup C(\mu)$, the supremum being taken over all $\mu=(\mu_0,\dots,\mu_N)$, with $\mu_i>0$, $i=1,\dots,N-1$, arbitrarily fixed, $\mu_0=\mu_N=0$, and
\beq
C(\mu)=\left(\sum_{i=1}^N(\mu_i-\mu_{i-1})^2\right)^{-1}\,\exp\left(\f2{N-1}\sum_{i=1}^{N-1}\log\mu_i\right)\,.
\label{eq:C mu}
\eeq
Then, the corresponding solution $\cax(t)\in\Pi_0$ vanishes in finite time. Moreover, criterion \eqref{eq:criterion BU discrt 3} is incompatible with condition \eqref{eq:smallnesscondition}. 
\label{prop:discrete continuum BU} 
\end{proposition}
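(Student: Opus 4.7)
The plan is to establish a refined lower bound on the discrete entropy $U[X]$ depending on arbitrary admissible weights $\mu=(\mu_0,\ldots,\mu_N)$, which mimics at the discrete level the Gaussian change of variable used in the proof of~\eqref{eq:LB_entropyinm}. With this sharp bound in hand, the proof closes exactly as in Proposition~\ref{prop:discrete BU 2}: one injects it into the energy identity~\eqref{eq:I'_discrete} and derives a differential inequality for $|\cax(t)|^2$ whose right-hand side is strictly negative under~\eqref{eq:criterion BU discrt 3}.

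To produce the refined bound, I fix $\mu$ with $\mu_0=\mu_N=0$ and $\mu_i>0$ otherwise. Abel summation together with the boundary conditions on $\mu$ gives the identity $\sum_{i=1}^{N-1}\mu_i(X_{i+1}-X_i)=-\sum_{i=1}^{N}(\mu_i-\mu_{i-1})X_i$, and Cauchy--Schwarz bounds its right-hand side by $\left(\sum_{i=1}^N(\mu_i-\mu_{i-1})^2\right)^{1/2}|X|$. Concavity of $\log$ applied to the uniform average of the positive quantities $\mu_i(X_{i+1}-X_i)$ then yields
\[
\sum_{i=1}^{N-1}\log(X_{i+1}-X_i) \le (N-1)\log\!\left(\tfrac{1}{N-1}\sum_{i=1}^{N-1}\mu_i(X_{i+1}-X_i)\right)-\sum_{i=1}^{N-1}\log\mu_i .
\]
Combining the two inequalities and recognizing the structure of~\eqref{eq:C mu}, one arrives at
\[
U[X]\ge \tfrac{h(N-1)}{2}\log\!\big(h^2(N-1)^2\,C(\mu)\big)-\tfrac{h(N-1)}{2}\log|X|^2 ,
\]
valid for every admissible $\mu$, hence also with $C(\mu)$ replaced by $C(N)=\sup_\mu C(\mu)$. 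This is the sharp replacement for~\eqref{est:U_gamma}.

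Injecting this bound into~\eqref{eq:I'_discrete} and using the monotonicity $G_\gamma[\cax](t)\le G_\gamma[\cax](0)$ gives a differential inequality of the form $\frac{d}{dt}|\cax(t)|^2\le A+\gamma(N-1)\log|\cax(t)|^2$, where $A$ collects the constants coming from $h(N-1)$, $G_\gamma[\cax](0)$ and $\log(h^2(N-1)^2 C(N))$; condition~\eqref{eq:criterion BU discrt 3} is precisely the requirement that this right-hand side be strictly negative at $t=0$. Since the bound is monotonically increasing in $|\cax|^2$, negativity propagates along the flow, forcing $|\cax(t)|^2$ to decrease, and the logarithmic divergence as $|\cax(t)|^2\to 0^+$ then produces extinction in finite time. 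For the incompatibility with~\eqref{eq:smallnesscondition}, I rewrite~\eqref{eq:criterion BU discrt 3} in the equivalent form
\[
\tfrac{h(N-1)}{2}\log|\cax(0)|^2+U[\cax](0)-\tfrac{h(N-1)}{2}\log\!\big(h^2(N-1)^2\,C(N)\big)<W_\gamma[\cax](0)-\tfrac{h(N-1)}{\gamma} .
\]
The left-hand side is nonnegative by the refined bound on $U$, so $W_\gamma[\cax](0)>h(N-1)/\gamma$, and the right-hand inequality in~\eqref{eq:phi control W} then yields $\gamma\phi_\gamma(\cax(0))>2(N-1)/N\ge 4/3>1$, which contradicts~\eqref{eq:smallnesscondition}.

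The main obstacle is identifying the correct pairing of Abel summation with Cauchy--Schwarz and Jensen that produces exactly the constant $C(\mu)$ of~\eqref{eq:C mu}. Viewing $\mu_i$ as the discrete analog of $G'$ at the grid points, with the differences $\mu_i-\mu_{i-1}$ playing the role of the identity $G''(x)=-xG'(x)$ and with the vanishing boundary data of $\mu$ echoing $G'(\pm\infty)=0$, makes the structure of the argument transparent and a faithful discretization of the continuous derivation in Lemma~\ref{lm: continuous_LB}; once this calibration is set up, the remainder of the proof is a direct extension of Proposition~\ref{prop:discrete BU 2}.
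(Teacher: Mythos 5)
Your proof is correct and follows essentially the same route as the paper: the same Jensen--Abel summation--Cauchy--Schwarz chain yielding the refined lower bound $U[X]\ge \frac{h(N-1)}{2}\log\bigl(h^2(N-1)^2C(\mu)\bigr)-\frac{h(N-1)}{2}\log|X|^2$, the same differential inequality for $|\cax(t)|^2$, and the same incompatibility argument via \eqref{eq:phi control W}. The only step you omit is the verification that $C(N)=\sup_\mu C(\mu)$ is finite --- the paper bounds $C(\mu)\le(\lambda_1(N-1))^{-1}$ with $\lambda_1=4\sin^2\bigl(\frac{\pi}{2N}\bigr)$ the smallest eigenvalue of $A\,A^T$, together with Jensen --- which is needed before you may pass from $C(\mu)$ to $C(N)$ in the entropy bound (though it also follows a posteriori by applying your inequality to any fixed $X\in\Pi_0$ with finite entropy).
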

\begin{proof} 
With $\mu$ arbitrarily given as above and for $X\in\Pi$, we have
\beq
\begin{split}
U[X]&=h(N-1)\log h-h\sum_{i=1}^{N-1}\log((X_{i+1}-X_i)\mu_i)+h\sum_{i=1}^{N-1}\log\mu_i\\ 
&\ge h(N-1)\log h-h(N-1)\log\left(\sum_{i=1}^{N-1}\f{\mu_i}{N-1}(X_{i+1}-X_i)\right)+h\sum_{i=1}^{N-1}\log\mu_i\\
&= h(N-1)\log(h(N-1))-h(N-1)\log\left(\sum_{i=1}^{N}X_i(\mu_{i-1}-\mu_i)\right)+h\sum_{i=1}^{N-1}\log\mu_i\\
&\ge h(N-1)\log(h(N-1))-\f{h(N-1)}2\log(|X|^2)+\f{h(N-1)}2\log C(\mu)\,.
\end{split}
\label{est:U_gamma 2}
\eeq
It remains to prove that the positive constant $C(\mu)$ is upper bounded, so that $C(N)$ is finite. Indeed, setting $\overline\mu=(\mu_1,\dots,\mu_{N-1})^T$, from the one hand we have
\[
\sum_{i=1}^N(\mu_i-\mu_{i-1})^2=\overline\mu^T(A\,A^T)\,\overline\mu\ge\lambda_1\sum_{i=1}^{N-1}\mu_i^2\,,
\]
where $\lambda_1=4\sin^2(\f\pi{2N})$ is the smallest eigenvalue of the squared matrix $A\,A^T$, and from the other hand 
\[
\exp\left(\f2{N-1}\sum_{i=1}^{N-1}\log\mu_i\right)=\prod_{i=1}^{N-1}\mu_i^{\f2{N-1}}\le \f1{N-1}\sum_{i=1}^{N-1}\mu_i^2\,,
\]
by the Jensen inequality. Hence, $C(\mu)\le({\lambda_1}(N-1))^{-1}$. We conclude plugging the lower bound  for $U[X]$ obtained optimizing \eqref{est:U_gamma 2} w.r.t. $\mu$, into \eqref{eq:I'_discrete} and using again the decreasing behaviour of $G_\gamma[\cax](t)$, so that
\[
\begin{split}
\f d{dt}|\cax(t)|^2\le&\,2\,(N-1)+2\gamma\,h^{-1}G_\gamma[\cax](0)+\gamma(N-1)\log(|\cax(t)|^2)\\ 
&-2\gamma(N-1)\log(h(N-1))-\gamma(N-1)\log C(N)\,.
\end{split}
\]
Criterion \eqref{eq:criterion BU discrt 3} follows immediately. The incompatibility of \eqref{eq:criterion BU discrt 3} with condition \eqref{eq:smallnesscondition} is proved exactly as in Proposition \ref{prop:discrete BU 2}.
\end{proof}

It is actually possible to prove that the new lower bound \eqref{est:U_gamma 2} for the entropy $U[X]$ is consistent with \eqref{eq:LB_entropyinm} and that criterion \eqref{eq:criterion BU discrt 3} converges toward \eqref{2nd_BU} as $N$ goes to infinity, since the constant $C(N)$ in \eqref{eq:criterion BU discrt 3} satisfies
\beq
\lim_{N\to+\infty}N^{-1}\,C(N)=\f1{2\pi\,e}\,.
\label{eq:limite C(N)/N}
\eeq

\begin{corollary}
The limit \eqref{eq:limite C(N)/N} holds true. 
\end{corollary}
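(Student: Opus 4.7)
The plan is to recognize $N^{-1}C(\mu)$ as a discrete analog of a continuous variational problem whose supremum equals $1/(2\pi e)$, and then to argue that the discrete supremum converges to the continuous one.

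\textbf{Step 1 (continuous sup).} For smooth $\phi:[0,1]\to[0,\infty)$ with $\phi(0)=\phi(1)=0$, set $\mathcal{F}[\phi]:=\exp(2\int_0^1\log\phi\,dm)/\int_0^1(\phi'(m))^2\,dm$. By Riemann-sum convergence, if $\mu_i=\phi(i/N)$ then $N^{-1}C(\mu)\to\mathcal{F}[\phi]$. I would first show $\sup_\phi\mathcal{F}[\phi]=1/(2\pi e)$, attained (up to the scale invariance $\phi\mapsto\lambda\phi$) by $\phi^*(m):=\tilde\rho(G^{-1}(m))$. This is a direct adaptation of the proof of \eqref{eq:LB_entropyinm}: decomposing $\log\phi=\log(\phi/\phi^*)+\log\phi^*$, Jensen applied to the concave function $\log$ combined with $\int_0^1\log\phi^*\,dm=\int_\R\tilde\rho\log\tilde\rho\,dx=-\frac12\log(2\pi e)$ yields
\[
\exp\!\left(2\int_0^1\log\phi\,dm\right)\le\frac{1}{2\pi e}\left(\int_0^1\frac{\phi(m)}{\phi^*(m)}\,dm\right)^{\!2}.
\]
The change of variable $m=G(x)$ together with integration by parts and the identity $\tilde\rho'=-x\tilde\rho$ shows $\int_0^1\phi/\phi^*\,dm=-\int_0^1 G^{-1}(m)\,\phi'(m)\,dm$, which Cauchy--Schwarz bounds by $\sqrt{\int_0^1(G^{-1})^2\,dm\cdot\int_0^1(\phi')^2\,dm}=\sqrt{\int_0^1(\phi')^2\,dm}$.

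\textbf{Step 2 (lower bound).} I test with $\mu_i^{(N)}:=\phi^*(i/N)$. Using Riemann-sum convergence, the identity $(\phi^*)'(m)=-G^{-1}(m)$, and $\int_0^1(G^{-1}(m))^2\,dm=1$, I get $\frac{2}{N-1}\sum\log\mu_i^{(N)}\to-\log(2\pi e)$ and $N\sum(\mu_i^{(N)}-\mu_{i-1}^{(N)})^2\to 1$. The boundary terms are benign: the Mills-ratio asymptotic gives $\mu_1^{(N)}\sim\sqrt{2\log N}/N$, hence $N(\mu_1^{(N)})^2=O(\log N/N)\to 0$. Thus $\liminf_N N^{-1}C(N)\ge 1/(2\pi e)$.

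\textbf{Step 3 (upper bound; main obstacle).} For an arbitrary $\mu$ I take the piecewise-linear interpolant $\phi_\mu$ on $[0,1]$ with $\phi_\mu(i/N)=\mu_i$ and $\phi_\mu(0)=\phi_\mu(1)=0$. One has $\int_0^1(\phi_\mu')^2\,dm=N\sum(\mu_i-\mu_{i-1})^2$ exactly, and the elementary identity $\int_0^1\log((1-t)a+tb)\,dt=(b\log b-a\log a)/(b-a)-1$ allows an explicit computation of $\int_0^1\log\phi_\mu\,dm$ showing it equals $\frac{1}{N}\sum_{i=1}^{N-1}\log\mu_i$ plus boundary and concavity corrections of order $1/N$. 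Combined with Step~1 this gives $N^{-1}C(\mu)\le(1+o(1))/(2\pi e)$, \emph{provided} the boundary terms $(\log\mu_1+\log\mu_{N-1})/N$ vanish. \emph{The main obstacle is making this $o(1)$ uniform in $\mu$}, since a priori $\mu_1$ can be arbitrarily small. I would resolve it by exploiting the discrete Euler--Lagrange equation $\mu_j(2\mu_j-\mu_{j-1}-\mu_{j+1})=S/(N-1)$ (with $S=\sum(\mu_i-\mu_{i-1})^2$) satisfied by any interior critical point of $C(\cdot)$: this is the discretization of $\phi\phi''=-\int_0^1(\phi')^2$, uniquely solved (up to scaling) by the Gaussian profile $\phi^*$. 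Consequently any near-maximizer mimics $\phi^*$ at the endpoints, $\mu_1,\mu_{N-1}$ are of order $\sqrt{\log N}/N$, and the boundary correction $O(\log N/N)$ indeed tends to $0$.
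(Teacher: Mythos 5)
Your Steps 1--2 are sound, and Step 2 coincides with the paper's own lower-bound argument: the paper also tests with $\mu_i=G'(H(i/N))=\tilde\rho(G^{-1}(i/N))$ (where $H=G^{-1}$), uses $G''=-xG'$ to write $\mu_{i+1}-\mu_i=-\frac1N H(\frac{i+\theta_i}N)$, and identifies the limits of the two factors in \eqref{eq:C mu} by Riemann sums, the endpoint terms being of order $\log N/N$ exactly as you say. The genuine gap is in Step 3. Comparing $N^{-1}C(\mu)$ with $\mathcal F[\phi_\mu]$ through the piecewise-linear interpolant leaves an uncontrolled factor of the form $\mu_1^{-1/N}\mu_{N-1}^{-1/N}$ (the two endpoint intervals contribute $\frac1N(\log\mu_i-1)$ to $\int_0^1\log\phi_\mu$, versus the $\frac1{2N}\log\mu_i$ that the concavity comparison yields on interior intervals), and this factor is not $1+o(1)$ uniformly in $\mu$ since $\mu_1$ may be arbitrarily small. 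You identify this obstacle correctly, but the proposed repair is a sketch, not a proof: it presupposes that the supremum defining $C(N)$ is attained, that \emph{near}-maximizers inherit the discrete Euler--Lagrange equation up to controllable errors (a quantitative stability statement that does not follow from writing down the equation), and that solutions of that equation satisfy $\mu_1\asymp\sqrt{\log N}/N$. None of these is established, so as written Step 3 does not deliver $\limsup_N N^{-1}C(N)\le(2\pi e)^{-1}$.

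The paper sidesteps the uniformity issue entirely, and this is the real difference between the two routes. Inequality \eqref{est:U_gamma 2} was proved for \emph{every} $\mu$ and \emph{every} $X\in\Pi_0$ simultaneously; rearranged it gives
\begin{equation*}
\log C(\mu)\le\frac{2}{h(N-1)}\,U[X]-2\log\bigl(h(N-1)\bigr)+\log\bigl(|X|^2\bigr)\,,
\end{equation*}
whose right-hand side does not depend on $\mu$. Taking the supremum over $\mu$ therefore costs nothing, and one only has to evaluate the right-hand side at a single well-chosen configuration; the Gaussian quantiles $X_i=H(\frac i{N+1})$ turn it into Riemann sums for $-\int_0^1\log H'$ and $\log\int_0^1H^2$, yielding $\limsup_N\log(N^{-1}C(N))\le-\log(2\pi e)$ directly. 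In effect the paper exploits the duality between the entropy bound and the constant $C(\mu)$ rather than solving the variational problem for $\mu$; your Step 1 correctly identifies the continuous limit problem, but that identification is not needed once the upper bound is obtained this way. If you want to keep your interpolation strategy, you must first reduce to $\mu$ with $\mu_1,\mu_{N-1}$ bounded below by $cN^{-1}$ (say), and that reduction is precisely what is missing.
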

\begin{proof}
First we assume $M=1$, without loss of generality, and we prove that
\beq
\limsup_{N\to+\infty}\ \log(N^{-1}\,C(N))\le-\log(2\pi e)\,.
\label{eq:limsup}
\eeq
The latter is an immediate consequence of \eqref{est:U_gamma 2}. Indeed, noting $H$ the inverse of the distribution function $G$ of $\f1{\sqrt{2\pi}}e^{-x^2/2}$, for $X=(H(\f i{N+1}))_{i=1,\dots,N}\in\Pi$ and appropriate $\theta_i\in[0,1]$, we have
\[
\begin{split}
\log(N^{-1}\,C(N))&\le \f2{h(N-1)}U[X]-2\log(h(N-1))+\log(|X|^2)-\log N\\
&=-\f2{(N-1)}\sum_{i=1}^{N-1}\log\left(H'\left(\f{i+\theta_i}{N+1}\right)\right)-2\log(h(N-1))\\
&\ +\log\left(\f1{N+1}\sum_{i=1}^{N}H^2\left(\f i{N+1}\right)\right)+\log\left(\f{N+1}N\right)\\
&=-\log(2\pi)-\f1{(N-1)}\sum_{i=1}^{N-1}H^2\left(\f {i+\theta_i}{N+1}\right)-2\log(h(N-1))\\
&\ +\log\left(\f1{N+1}\sum_{i=1}^{N}H^2\left(\f i{N+1}\right)\right)+\log\left(\f{N+1}N\right)\,.
\end{split}
\]
Passing to the limit as $N$ goes to infinity, we obtain \eqref{eq:limsup}. The limits above can be easily justified using the increasing behavior of $H$ and its symmetry with respect to $m=\f12$, so that $\int_0^1H^2(m)dm=2\int_0^{\f12}H^2(m)dm=2\int_{\f12}^1H^2(m)dm$. For instance, if $N=2n+1$, we have 
\[
I_N:=\f1{N+1}\sum_{i=1}^{N}H^2\left(\f i{N+1}\right)=\f2{N+1}\sum_{i=1}^{n}H^2\left(\f i{N+1}\right)\,,
\]
and
\[
2\int_{\f1{N+1}}^{\f12}H^2(m)\,dm\le I_N\le 2\int_{\f12}^1H^2(m)dm\,.
\]

To conclude, it remains to prove that there exists $\mu=(0,\mu_1,\dots,\mu_{N-1},0)$, with $\mu_i>0$, $i=1,\dots,N-1$, such that $N^{-1}\,C(\mu)$ converges toward $(2\pi e)^{-1}$ as $N$ goes to infinity. So, we consider $\mu_i=G'(H(\f iN))$, $i=1,\dots,N-1$. Then, from the one hand we have, for $i=1,\dots,N-2$ and appropriate $\theta_i\in[0,1]$,
\[
\begin{split}
\mu_{i+1}-\mu_{i}=\f1N\,G''\left(H\left(\f{i+\theta_i}N\right)\right)H'\left(\f{i+\theta_i}N\right)=-\f1N\,H\left(\f{i+\theta_i}N\right)\,,
\end{split}
\]
so that
\beq
N\sum_{i=1}^{N}(\mu_i-\mu_{i-1})^2=\f1N\sum_{i=1}^{N-2}H^2(\f{i+\theta_i}N)+\f N{2\pi}\exp(-H^2(\f1N))+\f N{2\pi}\exp(-H^2(\f{N-1}N))\,.
\label{eq:C(N)1}
\eeq
From the other hand
\beq
\left(\prod_{i=1}^{N-1}\mu_i\right)^{\f2{N-1}}=\f1{2\pi}\exp\left(-\f1{N-1}\sum_{i=1}^{N-1}H^2\left(\f iN\right)\right)\,.
\label{eq:C(N)2}
\eeq
Therefore, using \eqref{eq:C(N)1} and \eqref{eq:C(N)2}, we obtain the limit and the proof. 
\end{proof}

Finally, we shall show that the non optimal family of criteria obtained from \eqref{eq:criterion BU discrt 3} when $C(N)$ is replaced by $C(\mu)$, contains criterion  \eqref{eq:criterion BU discrt 2}. The latter is verified obviously iff $C(\mu)=\f1{N-1}$. With the constant $C(\mu)$ in \eqref{eq:C mu} written equivalently as
\[
C(\mu)=\left(2\sum_{i=1}^{N-1}\mu_i(\mu_i-\mu_{i-1})\right)^{-1}\left(\prod_{i=1}^{N-1}\mu_i\right)^{\f2{N-1}}\,,
\]
and $\mu_1=\nu>0$, $\mu_i=1$, $i=2,\dots,N-1$, the identity $C(\mu)=\f1{N-1}$ becomes
\[
\nu^{\f2{N-1}}=\f2{N-1}(\nu^2-\nu+1)\,,
\]
and the above equation has at least one solution for any $N\ge3$, (exactly one solution for $N=3$ and two for $N>3$).

\begin{remark}
It is easily seen that the blow-up criteria \eqref{eq:criterion BU discrt 1}, \eqref{eq:criterion BU discrt 2} and \eqref{eq:criterion BU discrt 3} are all invariant w.r.t. the scaling $(h_\lambda,X_\lambda)=(\lambda^\gamma\,h,\lambda\,X)$ of the discrete setting. 
\end{remark}
\section{The discrete threshold phenomenon for $\gamma=0$}
\label{sec:gamma zero}
In the limit case $\gamma=0$, the interaction potential kernel $K_\gamma$ has to be replaced with $K_0(x)=-\log|x|$ in equation~\eqref{eq:gamma_eq}. This problem reproduces in any dimension $d\ge1$, the well know critical mass phenomenon of the two dimensional parabolic-elliptic Keller-Segel system \cite{BDP}, basically because both the internal energy $\cau(\rho)$ and the interaction energy $\caw_0(\rho)$ scale additively with a logarithmic correction, under the action of the mass-preserving dilation $f^\lambda(x)=\f1{\lambda^d}\,f(\f x\lambda)$, {\sl i.e.}
\[
\mathcal{E}_0[\rho^\lambda ] =M\,\left( \dfrac{M}2 - d\right)\log\lambda+\mathcal{E}_0[\rho]\,.
\]
Then, the energy is bounded from below on the set of integrable densities $\rho$ with finite energy iff $M\le M_c=2d$, (see also \eqref{eq:I'(t)}). 

This problem has been analyzed, for any dimension $d\ge1$, in the Eulerian formulation \eqref{eq:gamma_eq} in \cite{CPS}, and following the  gradient flow formulation \eqref{eq:gradient_flow}, written in self-similar variables, in \cite{BCC} (see also \cite{CalvezCarrillo}). More specifically, the authors in  \cite{BCC} proved the convergence of the Jordan-Kinderlehrer-Otto scheme \cite{JKO} adapted to that problem when $M<M_c$. However, without going through this generality, if $d=1$ it is still possible to define the particle scheme when $\gamma=0$, setting 
\beq
W_0[X]:=-h^2\sum_{i=1}^{N-1}\ \sum_{j=i+1}^{N}\log(X_j-X_i)
\quad\text{and}\quad
\dot{\cax}(t)=-\nabla G_0[\cax](t)\,.
\label{eq:gamma=0}
\eeq
Then, thanks to the homogeneity property of the corresponding discrete energy, i.e. 
\[
G_0[\lambda X]=-h(N-1)\log\lambda+\f{h^2}2\,N(N-1)\log\lambda+G_0[X]\,,
\]
it holds
\[
\f12\f d{dt}|\cax(t)|^2=h(N-1)\left(1-\f{hN}2\right)\,,
\]
and the solution blows up iff $h>\f2N$, i.e. iff
\beq
M>2\left(\f{N+1}N\right)\,.
\label{eq:BU gamma=0}
\eeq
It is worth noticing that the above discrete blow-up criterion converges as $N~\to~\infty$ toward the continuous blow-up criterion $M>2$. 

We conclude this short review of the case $\gamma=0$ with the proof of the global existence of a solution of the particle system under the condition reverse to \eqref{eq:BU gamma=0}, showing that the threshold phenomenon holds true also at the discrete level.  To the best of our knowledge, this is a new result. 

\begin{theorem}[Global existence]
Let $\gamma=0$ and $N\ge3$. If 
\beq
M<2\left(\f{N+1}N\right)\,,
\label{eq:global condition gamma 0}
\eeq
then for any $\cax(0)\in\Pi$ there exists a unique global solution $\cax(t)\in\Pi$ of the dynamical system \eqref{eq:gamma=0}. Moreover, if $N=3$, the discrete entropy $U[\cax](t)$ is decreasing in time. 
\end{theorem}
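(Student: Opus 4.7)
The plan is to combine local Cauchy--Lipschitz existence in the open set $\Pi$ with the energy decay of the gradient flow. Starting from $\cax(0)\in\Pi$ the local solution extends to a maximal interval $[0,T_{\max})$, and uniqueness is immediate from Cauchy--Lipschitz. On this interval the only two failure modes are $|\cax(t)|\to+\infty$ or $\cax(t)\to\partial\Pi$ (a particle collision). The first mode is ruled out by the identity
\[
\tfrac{1}{2}\tfrac{d}{dt}|\cax(t)|^2 = h(N-1)\bigl(1-\tfrac{hN}{2}\bigr),
\]
since under \eqref{eq:global condition gamma 0}, equivalent to $h<2/N$, the right-hand side is a positive constant so $|\cax(t)|^2$ grows linearly and stays bounded on $[0,T_{\max}]$ whenever $T_{\max}<\infty$.

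To exclude collisions I would use the gradient-flow bound $G_0[\cax(t)]\le G_0[\cax(0)]$ together with the following coercivity claim: for any sequence $X_n\in\Pi_0$ with $|X_n|$ bounded and $X_n\to\partial\Pi_0$, one has $G_0[X_n]\to+\infty$. To see this, rewrite
\[
G_0[X]=h(N-1)\log h + (h^2-h)\sum_{i=1}^{N-1}\log(X_{i+1}-X_i) + h^2\!\!\sum_{j-i\ge 2}\!\!\log(X_j-X_i),
\]
and partition the particles into maximal clusters of mutually collapsing positions. A cluster $C$ of $K\le N$ particles whose diameter shrinks as $\delta_C\to 0$ contributes, after the rescaling $X_i=a_C+\delta_C Z_i$ for $i\in C$,
\[
\bigl[(K-1)(h^2-h)+\tfrac{(K-1)(K-2)}{2}h^2\bigr]\log\delta_C = (K-1)h\bigl(\tfrac{hK}{2}-1\bigr)\log\delta_C
\]
plus terms depending only on the rescaled positions $Z_i$. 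Since $K\le N$ and $h<2/N$, the prefactor $(K-1)h(hK/2-1)$ is strictly negative, so this contribution tends to $+\infty$. Between-cluster pair-distances converge to positive limits and contribute bounded terms. If the rescaled cluster configuration itself degenerates (nested sub-collisions), the decomposition is applied recursively. This coercivity contradicts the energy bound, so no collision can occur, and together with the norm estimate it gives $T_{\max}=+\infty$.

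For $N=3$ with $a=X_2-X_1>0$, $b=X_3-X_2>0$, direct computation of $\dot{\cax}=-\nabla G_0[\cax]$ yields
\[
\dot a=\tfrac{2h(1-h)}{a}-\tfrac{h(1-h)}{b}-\tfrac{h^2}{a+b},\qquad \dot b=\tfrac{2h(1-h)}{b}-\tfrac{h(1-h)}{a}-\tfrac{h^2}{a+b}.
\]
Substituting into $\frac{d}{dt}U[\cax]=-h(\dot a/a+\dot b/b)$, collapsing $\frac{1}{a(a+b)}+\frac{1}{b(a+b)}=\frac{1}{ab}$, and invoking $a^2-ab+b^2\ge ab$ (equivalent to $(a-b)^2\ge 0$) give
\[
\frac{d}{dt}U[\cax](t)\le \frac{h^2(3h-2)}{a(t)\,b(t)}<0,
\]
because $h=M/4<2/3$ under \eqref{eq:global condition gamma 0} with $N=3$. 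The main obstacle in the whole proof is the coercivity of $G_0$ on $\partial\Pi_0$ in the presence of nested sub-clusters that collapse at different rates; the cleanest way to treat it is a hierarchical rescaling (or induction on $N$) in which the sign $(K-1)h(hK/2-1)<0$, which fails exactly at the threshold $h=2/N$, is applied at each scale to drive the relevant contribution to $+\infty$.
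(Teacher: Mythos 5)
Your proposal is correct, and the two halves of it compare differently with the paper. The $N=3$ entropy-decrease part is essentially identical to the paper's: the paper computes $\frac{d}{dt}\phi_0(\cax(t))=2h^2(h-1)\bigl(a^{-2}+b^{-2}-(ab)^{-1}\bigr)+h^3(ab)^{-1}$ with $a=X_2-X_1$, $b=X_3-X_2$ (which differs from your $\frac{d}{dt}U$ only by the additive constant $h(N-1)\log h$), and closes with the same inequality $a^2-ab+b^2\ge ab$ to get $\frac{d}{dt}U\le h^2(3h-2)/(ab)<0$. The global-existence part, however, takes a genuinely different route. The paper proves the discrete logarithmic Hardy--Littlewood--Sobolev inequality $2W_0[X]<h\,N\,U[X]-h^2N(N-1)\log h$ on all of $\Pi$ (by the same crude pairwise comparison as in \eqref{eq:discrete HLS}), combines it with $U=G_0+W_0\le G_0[\cax(0)]+W_0$ and the choice $\theta=\frac{hN}{2}$ to obtain the uniform bound $(1-\frac{hN}{2})\,U[\cax](t)\le G_0[\cax(0)]-\frac{h^2N(N-1)}{2}\log h$, which (together with the second-moment identity keeping the gaps bounded above on finite time intervals) forbids collisions; the threshold $h<2/N$ enters only through the sign of $1-\frac{hN}{2}$. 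You instead prove coercivity of $G_0$ near $\partial\Pi$ on bounded sets by a cluster decomposition, where the exponent $(K-1)h(\frac{hK}{2}-1)<0$ for every cluster size $K\le N$ is exactly the same threshold seen scale by scale. Your route is more structural — it explains why $2/N$ is sharp at every collision pattern and avoids needing any functional inequality — but its delicate point, the nested sub-collisions collapsing at different rates, is only sketched and requires the recursive/compactness argument you allude to (every subsequence with convergent pairwise distances, induction on cluster size) to be carried out; the paper's single global inequality sidesteps this entirely and yields a quantitative bound on $U$ rather than a qualitative contradiction. Both arguments share the reliance on the energy decay $G_0[\cax(t)]\le G_0[\cax(0)]$ and on the second-moment identity to confine the trajectory on finite time intervals.
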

\begin{proof}
The proof of the local existence of a trajectory $\cax(t)$ starting from $\cax(0)$ is exactly the same as for $\gamma\in(0,1)$. To prove that this trajectory is global in time under condition \eqref{eq:global condition gamma 0}, {\sl i.e.} none of the contiguous particles collide, it is enough to prove that the discrete entropy $U[X]$ stay upper bounded along the trajectory $\cax(t)$ globally in time, since $U[X]=h\log\left(\prod_{i=1}^{N-1}h\,(X_{i+1}-X_i)^{-1}\right)$ goes to $+\infty$ as a relative difference $(X_{i+1}-X_i)$ vanishes. 

The proof is inspired by the theory of the two dimensional Keller-Segel system. Indeed it is based on the following (non-optimal) discretisation of the well known logarithmic Hardy-Littlewood-Sobolev inequality \cite{CarlenLoss}, 
\beq
2W_0[X]<h\,N\,U[X]-h^2N(N-1)\log h\,,\quad X\in\Pi\,,
\label{eq:discrete log HLS}
\eeq
that can be obtained exactly as the right inequality in \eqref{eq:phi control W}. Let us observe that the non-optimality concerns only the constant in the r.h.s. of~\eqref{eq:discrete log HLS} that goes to $+\infty$ as $h$ goes to 0, and not the constant factor of $U[X]$. That is the reason why we are nonetheless able to obtain the sharp global existence condition \eqref{eq:global condition gamma 0}. So, as in the continuous setting, using \eqref{eq:discrete log HLS} and the decreasing behavior of the discrete energy $G_0[X]$ along the trajectories, we have for $\theta>0$
\[
\begin{split}
(1-\theta)U[\cax](t)&\le G_0[\cax](0)-\theta U[\cax](t)+W_0[\cax](t)\\
&\le G_0[\cax](0)+\left(1-\f{2\theta}{h\,N}\right)W_0[\cax](t)-\theta\,h(N-1)\log h\,.
\end{split}
\]
Choosing $\theta=\f{h\,N}2$, we obtain a uniform in time upper bound for $U[\cax](t)$ under condition~\eqref{eq:global condition gamma 0}. 

Finally, if $N=3$, the decreasing behavior of $U[\cax](t)$ relies on the decreasing behavior of the function $\phi_0(X):=-h\sum_{i=1}^{N-1}\log(X_{i+1}-X_i)$, limit of the function $\phi_\gamma(X)$ as $\gamma$ goes to 0. Indeed, $\phi_0(\cax(t))$ evolves according to the equation
\[
\begin{split}
\f d{dt}\phi_0(\cax(t))&=2h^2(h-1)\left((X_2-X_1)^{-2}+(X_3-X_2)^{-2}-(X_2-X_1)^{-1}(X_3-X_2)^{-1}\right)\\
&\quad+h^3(X_2-X_1)^{-1}(X_3-X_2)^{-1}\,.
\end{split}
\]
Since $h<\f23$ by \eqref{eq:global condition gamma 0}, applying the Jensen inequality the above equation becomes  
\[
\f d{dt}\phi_0(\cax(t))<h^2(3h-2)(X_2-X_1)^{-1}(X_3-X_2)^{-1}
\]
and the claim follows. 
\end{proof}
\begin{figure}
\begin{center}
\includegraphics[width = \linewidth,height = .8\textheight]{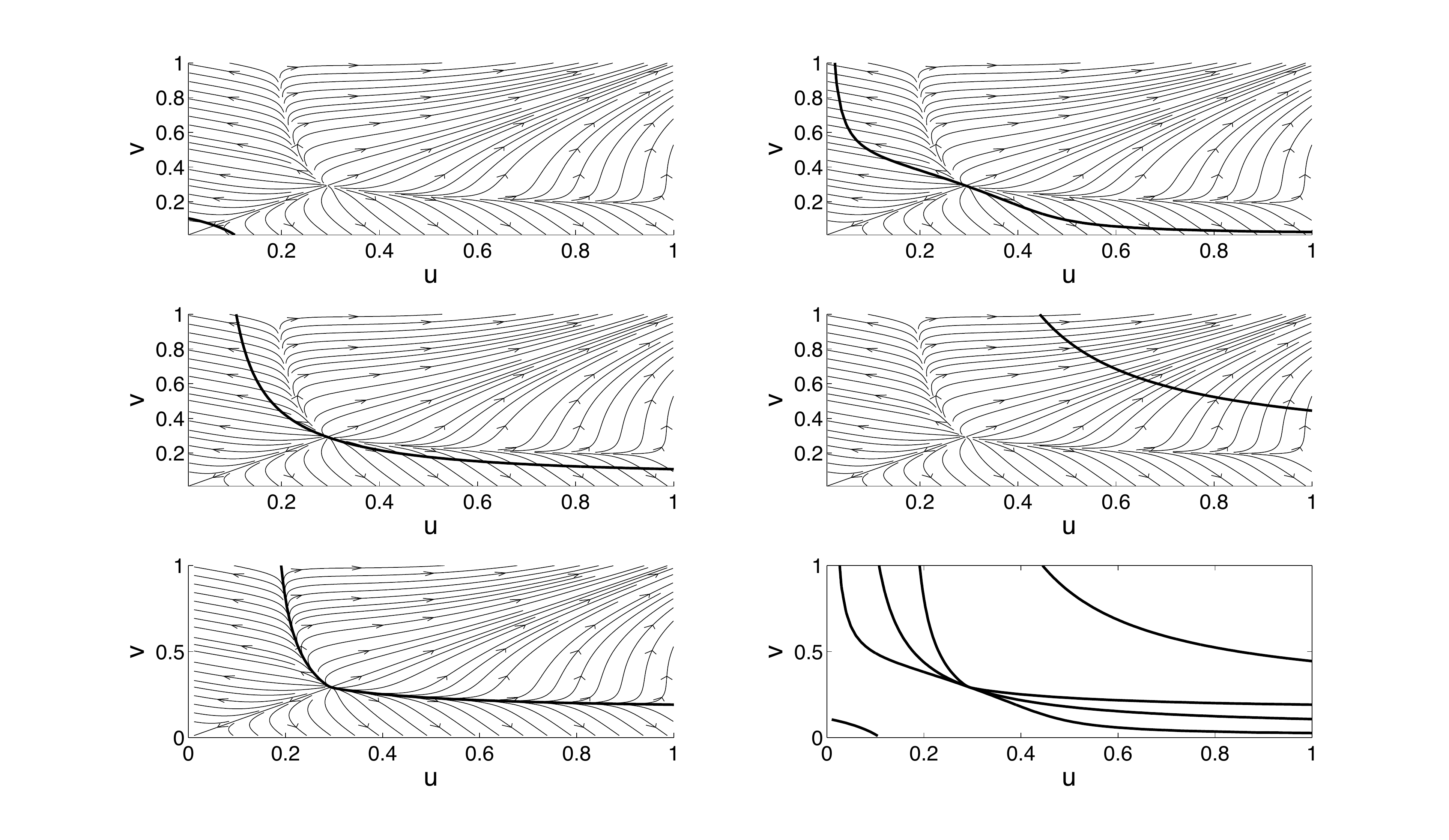}
\caption
{\small Behaviour of the three particles scheme, with $M=1$ and $\gamma=\f12$, in the phase plane $(u,v)=((X_2-X_1),(X_3-X_2))$. {\em Top left} : the blow-up criterion \eqref{eq:criterion BU discrt 1} is figured as a bold line. {\em Top right} : the  blow-up criterion \eqref{eq:criterion BU discrt 3} is figured as a bold line. {\em Middle left}: the curve $\gamma\,W_\gamma[X] = h(N-1)$ of the critical points of the energy, is plotted in bold. {\em Middle right} :  the global existence criterion \eqref{eq:smallnesscondition} is figured as a bold line. {\em Bottom left} : the unstable manifold starting from the energy maximal point and separating the two basins of attraction (global existence and blow-up) has been computed numerically and plotted in bold. {\em Bottom right} : all the previous lines are plotted on the same figure for the sake of comparison.
}
\label{fig:BU}
\end{center}
\end{figure}
\section*{Acknowledgements}
L.C. would like to thanks Francis Hirsch for helpful discussion. 
%
%


\noindent ${^a}$ Unit\'e de Math\'ematiques Pures et Appliqu\'ees,  CNRS UMR 5669 \\
\'Ecole Normale Sup\'erieure de Lyon, \\
46 all\'ee d'Italie, F~69364 Lyon cedex 07, France \\
e-mail: vincent.calvez@umpa.ens-lyon.fr
\bigskip

\noindent ${^b}$ Laboratoire d'Analyse et Probabilit\'e, \\
Universit\'e d'Evry Val d'Essonne, \\
23 Bd. de France, F--91037 Evry Cedex, France \\
e-mail: lucilla.corrias@univ-evry.fr

\end{document}